\newtheorem{theorem}{Theorem}[section]
\newtheorem{proposition}[theorem]{Proposition}
\newtheorem{corollary}[theorem]{Corollary}
\theoremstyle{definition}
\newtheorem{definition}[theorem]{Definition}
\newtheorem{example}[theorem]{Example}
\theoremstyle{remark}
\newtheorem{remark}[theorem]{Remark}
\numberwithin{equation}{section}
\begin{document}
\title[A new contribution to discontinuity at fixed point]{A new
contribution to discontinuity at fixed point}
\author[N. Ta\c{s} \MakeLowercase{and} N. Y\i lmaz \"{O}zg\"{u}r]{N\.{I}HAL
Ta\c{s}$^{1}$ \MakeLowercase{and} N\.{I}HAL YILMAZ \"{O}zg\"{u}r$^{1}$}
\address{$^{1}$Bal\i kesir University, Department of Mathematics, 10145
Bal\i kesir, TURKEY.}
\email{%
\textcolor[rgb]{0.00,0.00,0.84}{nihaltas@balikesir.edu.tr;
nihal@balikesir.edu.tr}}
\subjclass[2010]{Primary 47H10; Secondary 54H25, 47H09.}
\keywords{Discontinuity, fixed point, fixed circle, metric space, activation
function.}

\begin{abstract}
The aim of this paper is to obtain new solutions to the open question on the
existence of a contractive condition which is strong enough to generate a
fixed point but which does not force the map to be continuous at the fixed
point. To do this, we use the right-hand side of the classical Rhoades'
inequality and the number $M(x,y)$ given in the definition of an $(\alpha
,\beta )$-Geraghty type-$I$ rational contractive mapping. Also we give an
application of these new results to discontinuous activation functions.
\end{abstract}

\maketitle

%\date{Received: xxxxxx; Revised: yyyyyy; Accepted: zzzzzz. \\
%\indent $^{*}$Corresponding author: N. Y\i lmaz \"{O}zg\"{u}r \\
%Bal\i kesir University, Department of Mathematics, 10145 Bal\i kesir, TURKEY
%e-mail: nihal@balikesir.edu.tr}

\setcounter{page}{1}

%\dedicatory{This paper is dedicated to Professor ABCD}

\section{\textbf{Introduction and Preliminaries}}

\label{sec:intro} Recently, some solutions to the open question on the
existence of contractive conditions which are strong enough to generate a
fixed point but which do not force the mapping to be continuous at the fixed
point has been proposed and investigated (see \cite{Bisht-2017-1}, \cite%
{Bisht-2017-2}, \cite{Kannan-1969}, \cite{Pant-1999} and \cite{Rhoades-1988}
for more details). For example, in \cite{Pant-1999}, Pant proved the
following theorem as a solution of this problem.

\begin{theorem}
\label{thm8} \cite{Pant-1999} If a self-mapping $T$ of a complete metric
space $(X,d)$ satisfies the conditions$;$

\begin{enumerate}
\item $d(Tx,Ty)\leq \phi \left( \max \left\{ d(x,Tx),d(y,Ty)\right\} \right)
$, where $\phi :%
%TCIMACRO{\U{211d} }%
%BeginExpansion
\mathbb{R}
%EndExpansion
^{+}\rightarrow
%TCIMACRO{\U{211d} }%
%BeginExpansion
\mathbb{R}
%EndExpansion
^{+}$ is such that $\phi (t)<t$ for each $t>0,$

\item For a given $\varepsilon >0$, there exists a $\delta (\varepsilon )>0$
such that%
\begin{equation*}
\varepsilon <\max \left\{ d(x,Tx),d(y,Ty)\right\} <\varepsilon +\delta \text{%
,}
\end{equation*}%
implies $d(Tx,Ty)\leq \varepsilon $,
\end{enumerate}

then $T$ has a unique fixed point $z$. Moreover, $T$ is continuous at $z$ if
and only if%
\begin{equation*}
\underset{x\rightarrow z}{\lim }\max \left\{ d(x,Tx),d(z,Tz)\right\} =0\text{%
.}
\end{equation*}
\end{theorem}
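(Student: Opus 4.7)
The plan is to prove Theorem \ref{thm8} by the classical Picard-iteration route: start from any $x_0 \in X$, define $x_{n+1} = T x_n$, show the orbit is Cauchy, use completeness to obtain a limit $z$, verify that $z$ is a fixed point, establish uniqueness, and finally address the ``if and only if'' statement for continuity at $z$. Set $t_n = d(x_n, x_{n+1})$ throughout.

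First I would show that $\{t_n\}$ is strictly decreasing whenever $t_n > 0$. Applying condition~(1) with $x = x_{n-1}$ and $y = x_n$ gives $t_n \leq \phi(\max\{t_{n-1}, t_n\})$; if $t_n \geq t_{n-1} > 0$ the right-hand side equals $\phi(t_n) < t_n$, a contradiction. So $t_n < t_{n-1}$ for all $n$, meaning $\{t_n\}$ decreases to some limit $t \geq 0$. To conclude $t = 0$ I would use condition~(2): if $t > 0$, choose the associated $\delta(t) > 0$ and pick $n$ large enough that $t < t_{n-1} < t + \delta$; since $\max\{t_{n-1}, t_n\} = t_{n-1}$, condition~(2) forces $t_n = d(Tx_{n-1}, Tx_n) \leq t$, contradicting $t_n > t$. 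This step, and specifically the coupling between the strict-decrease provided by~(1) and the quantitative control provided by~(2), is the main technical hurdle since $\phi$ is not assumed to be continuous or monotone.

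Next I would prove the Cauchy property. Note that $0 \leq \phi(t) < t$ implies $\phi(t) \to 0$ as $t \to 0^+$. Applying condition~(1) to the pair $(x_m, x_n)$ gives $d(x_{m+1}, x_{n+1}) \leq \phi(\max\{t_m, t_n\}) \to 0$, so by the triangle inequality $d(x_m, x_n) \leq t_m + d(x_{m+1}, x_{n+1}) + t_n \to 0$; hence $\{x_n\}$ is Cauchy and, by completeness, converges to some $z \in X$. To see $Tz = z$, I would apply condition~(1) with $x = x_n$, $y = z$: if $d(z, Tz) > 0$, then for large $n$ we have $\max\{t_n, d(z,Tz)\} = d(z,Tz)$, so $d(x_{n+1}, Tz) \leq \phi(d(z,Tz)) < d(z,Tz)$; passing to the limit yields $d(z,Tz) \leq \phi(d(z,Tz)) < d(z,Tz)$, a contradiction. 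For uniqueness, if $z$ and $w$ are two fixed points with $z \neq w$, condition~(1) gives $d(z,w) = d(Tz,Tw) \leq \phi(\max\{0,0\}) = \phi(0)$; together with the standing assumption that $\phi(t) < t$ forces $\phi(0) = 0$ in the only way compatible with $\phi:\mathbb{R}^{+} \to \mathbb{R}^{+}$, so $d(z,w) = 0$.

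Finally I would handle the continuity characterization, which is routine once $Tz = z$ is known so that $d(z, Tz) = 0$ and therefore $\max\{d(x,Tx), d(z,Tz)\} = d(x,Tx)$. For the ``only if'' direction, continuity of $T$ at $z$ gives $Tx \to z$ as $x \to z$, whence $d(x, Tx) \leq d(x,z) + d(z, Tx) \to 0$. For the ``if'' direction, condition~(1) yields $d(Tx, z) = d(Tx, Tz) \leq \phi(d(x, Tx))$; since $0 \leq \phi(s) < s$ and $d(x, Tx) \to 0$ by hypothesis, $d(Tx, z) \to 0$, showing $T$ is continuous at $z$. I expect this last part to be the easiest; the real effort is concentrated in the Picard-convergence argument, where condition~(2) must compensate for the lack of regularity of $\phi$.
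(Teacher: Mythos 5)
The paper states Theorem \ref{thm8} only as a quoted result from \cite{Pant-1999} and gives no proof of it, so the natural basis for comparison is the paper's proofs of the analogous Theorems \ref{thm1} and \ref{thm2}, which follow the same template. Your argument reproduces that template faithfully where it matters: the strict decrease of $t_n=d(x_n,x_{n+1})$ via condition (1), the use of condition (2) to rule out a positive limit $t>0$ (picking $n$ with $t<t_{n-1}<t+\delta(t)$ and deriving $t_n\leq t$), and the limiting argument for $Tz=z$ are exactly the steps the paper carries out for $M_1$ and $M_2$. Where you genuinely diverge is the Cauchy step: the paper (following Jachymski) proves $d(x_k,x_{k+n})<\varepsilon+\delta$ by induction on $n$, invoking condition (2) at each stage, whereas you get Cauchyness directly from $d(x_{m+1},x_{n+1})\leq\phi(\max\{t_m,t_n\})<\max\{t_m,t_n\}\rightarrow 0$ plus the triangle inequality, without using condition (2) at all in that step. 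This shortcut is legitimate here precisely because the Kannan-type bound in Theorem \ref{thm8} involves only $d(x,Tx)$ and $d(y,Ty)$, both of which vanish along the orbit; it would not work for $M_1$ or $M_2$, which contain the term $d(x,y)$, and that is why the paper needs the induction. Your version is simpler and correct for this particular theorem.

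The one step that does not hold up is uniqueness. From two fixed points $z\neq w$ you get $d(z,w)\leq\phi(\max\{0,0\})=\phi(0)$, and you assert that $\phi(t)<t$ for $t>0$ ``forces $\phi(0)=0$.'' It does not: the hypothesis constrains $\phi$ only on $(0,\infty)$ and says nothing about $\phi(0)$. Concretely, on the two-point space $\{a,b\}$ with $d(a,b)=1$, the identity map satisfies condition (1) with $\phi(0)=1$ and $\phi(t)=t/2$ for $t>0$, and satisfies condition (2) vacuously, yet it has two fixed points. So a convention such as $\phi(0)=0$ (implicit in \cite{Pant-1999}) is genuinely needed, and your proof should state that it is assuming it rather than deriving it. Note that the paper's own theorems do not face this issue, since $M_1(y_0,z_0)=M_2(y_0,z_0)=d(y_0,z_0)>0$ for distinct fixed points, so the strict inequality $\psi(s)<s$ applies directly there. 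Everything else, including the continuity characterization at the end, is fine.
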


After then, in \cite{Bisht-2017-1}, Bisht and Pant obtained a new solution
of the open problem using the number%
\begin{equation*}
M(x,y)=\max \left\{ d(x,y),d(x,Tx),d(y,Ty),\frac{d(x,Ty)+d(y,Tx)}{2}\right\}
\text{.}
\end{equation*}%
Also, in \cite{Bisht-2017-2}, they proved a fixed-point theorem for this
problem using the number%
\begin{equation*}
N(x,y)=\max \left\{ d(x,y),d(x,Tx),d(y,Ty),\frac{\alpha \left[
d(x,Ty)+d(y,Tx)\right] }{2}\right\} \text{,}
\end{equation*}%
where $0\leq \alpha <1$.

Motivated by the above studies, we investigate new contractive conditions to
obtain one more solution to the open question. Before stating our main
results, we recall the following definitions which are necessary in the next
section.

\begin{definition}
\cite{Rhoades-1977} \label{def2} Let $(X,d)$ be a complete metric space and $%
T$ be a self-mapping of $X$. $T$ is called a Rhoades' mapping if the
following condition is satisfied for each $x,y\in X$, $x\neq y:$%
\begin{equation*}
d(Tx,Ty)<\max \{d(x,y),d(x,Tx),d(y,Ty),d(x,Ty),d(y,Tx)\}.
\end{equation*}
\end{definition}

Let $\Theta $ be a family of functions $\theta :\left[ 0,\infty \right)
\rightarrow \left[ 0,1\right) $ such that for any bounded sequence $\left\{
t_{n}\right\} $ of positive real numbers, $\theta \left( t_{n}\right)
\rightarrow 1$ implies $t_{n}\rightarrow 0$ and $\Phi $ be a family of
functions $\phi :\left[ 0,\infty \right) \rightarrow \left[ 0,\infty \right)
$ such that $\phi $ is continuous, strictly increasing and $\phi (0)=0$.

\begin{definition}
\cite{Chandok-2015} \label{def1} Let $(X,d)$ be a metric space, $%
T:X\rightarrow X$ be a mapping and $\alpha ,\beta :X\times X\rightarrow
%TCIMACRO{\U{211d} }%
%BeginExpansion
\mathbb{R}
%EndExpansion
^{+}$. A mapping $T$ is said to be $(\alpha ,\beta )$-Geraghty type-$I$
rational contractive mapping if there exists a $\theta \in \Theta $, such
that for all $x,y\in X$, the following condition holds$:$%
\begin{equation*}
\alpha (x,Tx)\beta (y,Ty)\phi (d(Tx,Ty))\leq \theta (\phi (M(x,y)))\phi
(M(x,y))\text{,}
\end{equation*}%
where%
\begin{equation*}
M(x,y)=\max \left\{ d(x,y),d(x,Tx),d(y,Ty),\frac{d(x,Tx)d(y,Ty)}{1+d(x,y)},%
\frac{d(x,Tx)d(y,Ty)}{1+d(Tx,Ty)}\right\}
\end{equation*}%
and $\phi \in \Phi $.
\end{definition}

On the other hand, there are some examples of self-mappings which have at
least two fixed points. In this case, new fixed-point results are necessary
for the existence of fixed points of self-mappings. Also it is important to
study the mappings with a fixed circle since there are some applications of
these kind mappings to neural networks (see \cite{Ozdemir-HNNV} for more
details). More recently, some fixed-circle theorems have been presented as a
different direction for the generalizations of the known fixed-point
theorems (see \cite{Ozgur-circle}, \cite{Ozgur-circle-S1} and \cite%
{Ozgur-circle-S2} for more details).

Now we recall the following definition of a fixed circle and one of the
known existence theorems for fixed circles.

\begin{definition}
\cite{Ozgur-circle} \label{defcircle} Let $(X,d)$ be a metric space and $%
C_{x_{0},r}=\{x\in X:d(x_{0},x)=r\}$ be a circle. For a self-mapping $%
T:X\rightarrow X$, if $Tx=x$ for every $x\in C_{x_{0},r}$ then we call the
circle $C_{x_{0},r}$ as the fixed circle of $T$.
\end{definition}

\begin{theorem}
\cite{Ozgur-circle} \label{thm7} Let $(X,d)$ be a metric space and $%
C_{x_{0},r}$ be any circle on $X$. Let us define the mapping%
\begin{equation*}
\varphi :X\rightarrow \left[ 0,\infty \right) \text{, }\varphi (x)=d(x,x_{0})%
\text{,}
\end{equation*}%
for all $x\in X$. If there exists a self-mapping $T:X\rightarrow X$
satisfying

$(C1)$ $d(x,Tx)\leq \varphi (x)-\varphi (Tx)$\newline
and

$(C2)$ $d(Tx,x_{0})\geq r$,\newline
for each $x\in C_{x_{0},r}$, then the circle $C_{x_{0},r}$ is a fixed circle
of $T$.
\end{theorem}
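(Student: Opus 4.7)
The plan is to show directly that for an arbitrary $x\in C_{x_{0},r}$, conditions (C1) and (C2) together force $d(x,Tx)=0$. The engineering of the two hypotheses is exactly right for this: (C1) is a contractive-type bound that gives an \emph{upper} bound on $d(x,Tx)$ in terms of how much $T$ decreases the distance to $x_{0}$, while (C2) simultaneously prevents $T$ from decreasing that distance at all. So the two hypotheses pinch $d(x,Tx)$ between $0$ and a non-positive quantity.

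Concretely, I would fix $x\in C_{x_{0},r}$ and first record that by the definition of the circle and of $\varphi$ we have $\varphi(x)=d(x,x_{0})=r$. Substituting this into (C1) gives
\begin{equation*}
d(x,Tx)\;\leq\;\varphi(x)-\varphi(Tx)\;=\;r-d(Tx,x_{0}).
\end{equation*}
Next I would invoke (C2), which says $d(Tx,x_{0})\geq r$, so that the right-hand side above satisfies $r-d(Tx,x_{0})\leq 0$. Combining with the nonnegativity of the metric,
\begin{equation*}
0\;\leq\;d(x,Tx)\;\leq\;r-d(Tx,x_{0})\;\leq\;0,
\end{equation*}
which forces $d(x,Tx)=0$, i.e.\ $Tx=x$. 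Since $x\in C_{x_{0},r}$ was arbitrary, every point of the circle is fixed, and $C_{x_{0},r}$ is a fixed circle of $T$ in the sense of Definition \ref{defcircle}.

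There is no real obstacle here; the proof is essentially a one-line pinching argument once one notices that $\varphi$ takes the constant value $r$ on the circle. The only thing worth flagging as a by-product is that equality must hold throughout the display above, so in fact $d(Tx,x_{0})=r$, confirming $Tx\in C_{x_{0},r}$; this is consistent with, but not needed for, the conclusion. If desired, one could also remark that (C1) alone implies $\varphi(Tx)\leq\varphi(x)$ (i.e.\ $T$ is nonexpansive toward $x_{0}$), so (C2) is exactly the extremal case of (C1) on the circle, explaining why both conditions are tight.
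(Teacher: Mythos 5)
Your argument is correct and is exactly the standard pinching proof of this result: on the circle $\varphi(x)=r$, so (C1) gives $d(x,Tx)\leq r-d(Tx,x_{0})$, which (C2) forces to be $\leq 0$, hence $Tx=x$. The paper only quotes this theorem from \cite{Ozgur-circle} without reproducing its proof, but the proof in that reference is the same one-line argument you give, so there is nothing to add.
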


Our aim in this paper is to obtain new solutions to the open question on the
existence of contractive conditions which are strong enough to generate a
fixed point but which do not force the mapping to be continuous at the
point. In Section \ref{sec:1}, we use the right-hand side of the classical
Rhoades' inequality and the number $M(x,y)$ given in the definition of an $%
(\alpha ,\beta )$-Geraghty type-$I$ rational contractive mapping for this
purpose. In Section \ref{sec:2}, we give an application of these new results
to discontinuous activation functions.

\section{\textbf{Main Results}}

\label{sec:1} In this section, we investigate some contractive conditions
for the open question mentioned in the introduction.

\begin{theorem}
\label{thm1} Let $(X,d)$ be a complete metric space and $T$ be a
self-mapping on $X$ satisfying the following conditions$:$

\begin{enumerate}
\item There exists a function $\psi :%
%TCIMACRO{\U{211d} }%
%BeginExpansion
\mathbb{R}
%EndExpansion
^{+}\rightarrow
%TCIMACRO{\U{211d} }%
%BeginExpansion
\mathbb{R}
%EndExpansion
^{+}$ such that $\psi (t)<t$ for each $t>0$ and $d(Tx,Ty)\leq \psi
(M_{1}(x,y))$ where%
\begin{equation*}
M_{1}(x,y)=\max \left\{ d(x,y),d(x,Tx),d(y,Ty),\frac{d(x,Tx)d(y,Ty)}{1+d(x,y)%
},\frac{d(x,Tx)d(y,Ty)}{1+d(Tx,Ty)}\right\} ;
\end{equation*}

\item There exists a $\delta (\varepsilon )>0$ such that $\varepsilon
<M_{1}(x,y)<\varepsilon +\delta $ implies $d(Tx,Ty)\leq \varepsilon $ for a
given $\varepsilon >0$.
\end{enumerate}

Then $T$ has a unique fixed point $y_{0}\in X$ and $T^{n}x\rightarrow y_{0}$
for each $x\in X$. Also, $T$ is discontinuous at $y_{0}$ if and only if $%
\underset{x\rightarrow y_{0}}{\lim }M_{1}(x,y_{0})\neq 0$.
\end{theorem}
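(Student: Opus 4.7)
The plan is Picard iteration coupled with the Pant-style $(\varepsilon,\delta)$-bootstrap on condition (2). Fix $x_0 \in X$ and set $x_n = T^n x_0$, $c_n = d(x_n, x_{n+1})$. Observing that both rational terms in $M_1(x_n, x_{n+1})$ are bounded by $\max\{c_n, c_{n+1}\}$ (since $c_n/(1+c_n) < 1$), one gets $M_1(x_n, x_{n+1}) = \max\{c_n, c_{n+1}\}$. Condition (1) then forces $\{c_n\}$ to be non-increasing, for if $c_{n+1} > c_n$ we would obtain $c_{n+1} \leq \psi(c_{n+1}) < c_{n+1}$. If $c_n \to a > 0$, then for $\delta = \delta(a)$ from (2) we have $c_n \in (a, a+\delta)$ eventually, so (2) yields $c_{n+1} \leq a$, contradicting strict decrease; hence $c_n \to 0$.

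The main obstacle is proving that $\{x_n\}$ is Cauchy, because $M_1$ contains the non-vanishing term $d(x,y)$ (so the classical Pant argument, which relies on $M \to 0$, does not apply). Suppose $\{x_n\}$ is not Cauchy: there exist $\varepsilon > 0$ and indices $n_k < m_k$, with $m_k$ the smallest for which $d(x_{n_k}, x_{m_k}) \geq \varepsilon$, whence $d(x_{n_k}, x_{m_k-1}) < \varepsilon$ and $d(x_{n_k}, x_{m_k}) \to \varepsilon$. Using $c_n \to 0$ and the triangle inequality, $d(x_{n_k-1}, x_{m_k-1}) \to \varepsilon$, and every other summand of $M_1(x_{n_k-1}, x_{m_k-1})$ tends to zero, so $M_1(x_{n_k-1}, x_{m_k-1}) \to \varepsilon$. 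The crucial point is that (1) rules out approach from below: indeed $\varepsilon \leq d(x_{n_k}, x_{m_k}) \leq \psi(M_1(x_{n_k-1}, x_{m_k-1})) < M_1(x_{n_k-1}, x_{m_k-1})$, so the convergence is strictly from above. Hence for large $k$, $M_1(x_{n_k-1}, x_{m_k-1}) \in (\varepsilon, \varepsilon + \delta)$ with $\delta = \delta(\varepsilon)$ from (2), forcing $d(x_{n_k}, x_{m_k}) \leq \varepsilon$ and therefore $= \varepsilon$. One more bootstrap closes the argument: for the pair $(x_{n_k}, x_{m_k})$ itself, $M_1(x_{n_k}, x_{m_k}) = \varepsilon$ for large $k$ (the other terms being small), so (1) gives $d(x_{n_k+1}, x_{m_k+1}) \leq \psi(\varepsilon) < \varepsilon$; but $|d(x_{n_k+1}, x_{m_k+1}) - d(x_{n_k}, x_{m_k})| \leq c_{n_k} + c_{m_k} \to 0$ yields $d(x_{n_k+1}, x_{m_k+1}) \to \varepsilon$, the desired contradiction.

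By completeness $x_n \to y_0$ for some $y_0 \in X$. To check $Ty_0 = y_0$, note $M_1(x_n, y_0) \to d(y_0, Ty_0)$ because the remaining components vanish; if $d(y_0, Ty_0) > 0$ then $M_1(x_n, y_0) = d(y_0, Ty_0)$ for large $n$, and (1) gives $d(x_{n+1}, Ty_0) \leq \psi(d(y_0, Ty_0)) < d(y_0, Ty_0)$, which on passing to the limit (with $x_{n+1} \to y_0$) is a contradiction. Uniqueness is a one-line application of (1) to two putative fixed points, since then $M_1(y_0, z_0) = d(y_0, z_0)$. Finally, for the continuity characterization, continuity of $T$ at $y_0$ trivially implies $M_1(x, y_0) \to 0$ as $x \to y_0$; conversely, from $0 \leq \psi(t) < t$ one has $\psi(t) \to 0$ as $t \to 0^+$, so $M_1(x, y_0) \to 0$ gives $d(Tx, y_0) \leq \psi(M_1(x, y_0)) \to 0$, i.e.\ $T$ is continuous at $y_0$.
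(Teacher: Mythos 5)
Your proposal is correct, and it reaches the conclusion by a genuinely different route in the one place where the two arguments could diverge: the Cauchy step. The paper follows Jachymski's scheme: it fixes $\varepsilon>0$, picks $k$ with $u_n=d(x_n,x_{n+1})<\delta$ for $n\geq k$, and proves $d(x_k,x_{k+n})<\varepsilon+\delta$ by induction on $n$, using the estimate $M_1(x_k,x_{k+n})<\varepsilon+\delta$ together with condition (2) to push the bound forward one step. You instead negate Cauchyness, extract the standard minimal-index pairs $n_k<m_k$ with $d(x_{n_k},x_{m_k})\to\varepsilon^{+}$, observe that condition (1) forces $M_1(x_{n_k-1},x_{m_k-1})$ to approach $\varepsilon$ strictly from above so that condition (2) applies, and then close with one more application of (1) to the pair $(x_{n_k},x_{m_k})$, where $M_1$ equals $\varepsilon$ exactly and $\psi(\varepsilon)<\varepsilon$ is a \emph{fixed} constant. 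Both arguments are valid; the paper's induction is shorter once set up (and is the route the authors reuse verbatim for Theorem \ref{thm2}), while yours isolates more explicitly why the two hypotheses interlock --- (1) prevents $M_1$ from sitting at or below $\varepsilon$, (2) caps the image once $M_1$ enters $(\varepsilon,\varepsilon+\delta)$. A further point in your favour: in the existence step the paper passes a strict inequality $d(Ty_0,Tx_n)<M_1(y_0,x_n)$ to the limit and reads off $d(Ty_0,y_0)<d(y_0,Ty_0)$, which is not literally justified since strict inequalities only survive limits as $\leq$; your version avoids this by noting that $M_1(x_n,y_0)$ is eventually the constant $d(y_0,Ty_0)$, so $\psi$ of it is a fixed number strictly below $d(y_0,Ty_0)$, against which the limit $d(x_{n+1},Ty_0)\to d(y_0,Ty_0)$ gives a genuine contradiction. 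The remaining parts (monotonicity of $u_n$, uniqueness, and the continuity characterization) coincide with the paper's in substance.
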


\begin{proof}
Let $x_{0}\in X$, $x_{0}\neq Tx_{0}$ and the sequence $\left\{ x_{n}\right\}
$ be defined as $Tx_{n}=x_{n+1}$ for all $n\in
%TCIMACRO{\U{2115} }%
%BeginExpansion
\mathbb{N}
%EndExpansion
\cup \left\{ 0\right\} $. Using the condition (1), we have%
\begin{eqnarray}
d(x_{n},x_{n+1}) &=&d(Tx_{n-1},Tx_{n})\leq \psi
(M_{1}(x_{n-1},x_{n}))<M_{1}(x_{n-1},x_{n})  \notag \\
&=&\max \left\{
\begin{array}{c}
d(x_{n-1},x_{n}),d(x_{n-1},x_{n}),d(x_{n},x_{n+1}), \\
\frac{d(x_{n-1},x_{n})d(x_{n},x_{n+1})}{1+d(x_{n-1},x_{n})},\frac{%
d(x_{n-1},x_{n})d(x_{n},x_{n+1})}{1+d(x_{n},x_{n+1})}%
\end{array}%
\right\}   \label{eqn1} \\
&=&\max \left\{ d(x_{n-1},x_{n}),d(x_{n},x_{n+1})\right\} \text{.}  \notag
\end{eqnarray}%
Assume that $d(x_{n-1},x_{n})<d(x_{n},x_{n+1})$. Then from the inequality (%
\ref{eqn1}) we get%
\begin{equation*}
d(x_{n},x_{n+1})<d(x_{n},x_{n+1})\text{,}
\end{equation*}%
which is a contradiction. So $d(x_{n},x_{n+1})<d(x_{n-1},x_{n})$ and
\begin{equation*}
M_{1}(x_{n-1},x_{n})=\max \left\{ d(x_{n-1},x_{n}),d(x_{n},x_{n+1})\right\}
=d(x_{n-1},x_{n}).
\end{equation*}
If we put $d(x_{n},x_{n+1})=u_{n}$ then from the inequality (\ref{eqn1}) we
obtain%
\begin{equation}
u_{n}<u_{n-1}\text{,}  \label{eqn2}
\end{equation}%
that is, $u_{n}$ is a strictly decreasing sequence of positive real numbers
and so the sequence $u_{n}$ tends to a limit $u\geq 0$.

Suppose that $u>0$. There exists a positive integer $k\in
%TCIMACRO{\U{2115} }%
%BeginExpansion
\mathbb{N}
%EndExpansion
$ such that $n\geq k$ implies%
\begin{equation}
u<u_{n}<u+\delta (u)\text{.}  \label{eqn3}
\end{equation}%
Using the condition (2) and the inequality (\ref{eqn2}), we get%
\begin{equation}
d(Tx_{n-1},Tx_{n})=d(x_{n},x_{n+1})=u_{n}<u\text{,}  \label{eqn4}
\end{equation}%
for $n\geq k$. The inequality (\ref{eqn4}) contradicts to the inequality (%
\ref{eqn3}). Then it should be $u=0$.

Now we show that $\left\{ u_{n}\right\} $ is a Cauchy sequence. Let us fix
an $\varepsilon >0$. Without loss of generality, we can assume that $\delta
(\varepsilon )<\varepsilon $. There exists $k\in
%TCIMACRO{\U{2115} }%
%BeginExpansion
\mathbb{N}
%EndExpansion
$ such that%
\begin{equation*}
d(x_{n},x_{n+1})=u_{n}<\delta \text{ }(0<\delta <1)\text{,}
\end{equation*}%
for $n\geq k$ since $u_{n}\rightarrow 0$. Following Jachymski (see \cite%
{Jachymski-1994} and \cite{Jachymski-1995} for more details), using the
mathematical induction, we prove%
\begin{equation}
d(x_{k},x_{k+n})<\varepsilon +\delta \text{,}  \label{eqn5}
\end{equation}%
for any $n\in
%TCIMACRO{\U{2115} }%
%BeginExpansion
\mathbb{N}
%EndExpansion
$. The inequality (\ref{eqn5}) holds for $n=1$ since%
\begin{equation*}
d(x_{k},x_{k+1})=u_{k}<\delta <\varepsilon +\delta \text{.}
\end{equation*}%
Assume that the inequality (\ref{eqn5}) is true for some $n$. We prove it
for $n+1$. Using the triangle inequality, we obtain
\begin{equation*}
d(x_{k},x_{k+n+1})\leq d(x_{k},x_{k+1})+d(x_{k+1},x_{k+n+1})\text{.}
\end{equation*}%
It suffices to show $d(x_{k+1},x_{k+n+1})\leq \varepsilon $. To do this, we
prove $M_{1}(x_{k},x_{k+n})\leq \varepsilon +\delta $, where%
\begin{equation}
M_{1}(x_{k},x_{k+n})=\max \left\{
\begin{array}{c}
d(x_{k},x_{k+n}),d(x_{k},Tx_{k}),d(x_{k+n},Tx_{k+n}), \\
\frac{d(x_{k},Tx_{k})d(x_{k+n},Tx_{k+n})}{1+d(x_{k},x_{k+n})},\frac{%
d(x_{k},Tx_{k})d(x_{k+n},Tx_{k+n})}{1+d(Tx_{k},Tx_{k+n})}%
\end{array}%
\right\} \text{.}  \label{eqn6}
\end{equation}%
Using the mathematical induction hypothesis, we find%
\begin{equation}
\begin{array}{l}
d(x_{k},x_{k+n})<\varepsilon +\delta \text{,} \\
d(x_{k},x_{k+1})<\delta \text{,} \\
d(x_{k+n},x_{k+n+1})<\delta \text{,} \\
\frac{d(x_{k},Tx_{k})d(x_{k+n},Tx_{k+n})}{1+d(x_{k},x_{k+n})}<\frac{\delta
^{2}}{1+d(x_{k},x_{k+n})}\text{,} \\
\frac{d(x_{k},Tx_{k})d(x_{k+n},Tx_{k+n})}{1+d(Tx_{k},Tx_{k+n})}<\frac{\delta
^{2}}{1+d(Tx_{k},Tx_{k+n})}\text{.}%
\end{array}
\label{eqn7}
\end{equation}%
Using the conditions (\ref{eqn6}) and (\ref{eqn7}), we have $%
M_{1}(x_{k},x_{k+n})<\varepsilon +\delta $. From the condition (2), we obtain%
\begin{equation*}
d(Tx_{k},Tx_{k+n})=d(x_{k+1},x_{k+n+1})\leq \varepsilon \text{.}
\end{equation*}%
Therefore, the inequality (\ref{eqn5}) implies that $\left\{ x_{n}\right\} $
is Cauchy. Since $(X,d)$ is a complete metric space, there exists a point $%
y_{0}\in X$ such that $x_{n}\rightarrow y_{0}$ as $n\rightarrow \infty $.
Also we get $Tx_{n}\rightarrow y_{0}$.

Now we show that $Ty_{0}=y_{0}$. On the contrary, suppose that $y_{0}$ is
not a fixed point of $T$, that is, $Ty_{0}\neq y_{0}$. Then using the
condition (1), we get%
\begin{eqnarray*}
d(Ty_{0},Tx_{n}) &\leq &\psi (M_{1}(y_{0},x_{n}))<M_{1}(y_{0},x_{n}) \\
&=&\max \left\{
\begin{array}{c}
d(y_{0},x_{n}),d(y_{0},Ty_{0}),d(x_{n},Tx_{n}), \\
\frac{d(y_{0},Ty_{0})d(x_{n},Tx_{n})}{1+d(y_{0},x_{n})},\frac{%
d(y_{0},Ty_{0})d(x_{n},Tx_{n})}{1+d(Ty_{0},Tx_{n})}%
\end{array}%
\right\}
\end{eqnarray*}%
and so taking limit for $n\rightarrow \infty $ we have%
\begin{equation*}
d(Ty_{0},y_{0})<d(y_{0},Ty_{0})=d(Ty_{0},y_{0})\text{,}
\end{equation*}%
which is a contradiction. Thus $y_{0}$ is a fixed point of $T$. We prove
that the fixed point $y_{0}$ is unique. Let $z_{0}$ be another fixed point
of $T$ such that $y_{0}\neq z_{0}$. By the condition (1), we find%
\begin{eqnarray*}
d(Ty_{0},Tz_{0}) &=&d(y_{0},z_{0})\leq \psi
(M_{1}(y_{0},z_{0}))<M_{1}(y_{0},z_{0}) \\
&=&\max \left\{
\begin{array}{c}
d(y_{0},z_{0}),d(y_{0},y_{0}),d(z_{0},z_{0}), \\
\frac{d(y_{0},y_{0})d(z_{0},z_{0})}{1+d(y_{0},z_{0})},\frac{%
d(y_{0},y_{0})d(z_{0},z_{0})}{1+d(y_{0},z_{0})}%
\end{array}%
\right\} \\
&=&d(y_{0},z_{0})\text{,}
\end{eqnarray*}%
which is a contradiction. Hence $y_{0}$ is the unique fixed point of $T$.

Finally, we prove that $T$ is discontinuous at $y_{0}$ if and only if $%
\underset{x\rightarrow y_{0}}{\lim }M_{1}(x,y_{0})\neq 0$. To do this, we
show that $T$ is continuous at $y_{0}$ if and only if $\underset{%
x\rightarrow y_{0}}{\lim }M_{1}(x,y_{0})=0$. Let $T$ be continuous at the
fixed point $y_{0}$ and $x_{n}\rightarrow y_{0}$. Then $Tx_{n}\rightarrow
Ty_{0}=y_{0}$ and%
\begin{equation*}
d(x_{n},Tx_{n})\leq d(x_{n},y_{0})+d(Tx_{n},y_{0})\rightarrow 0\text{.}
\end{equation*}%
Hence we get $\underset{n}{\lim }M_{1}(x_{n},y_{0})=0$. On the other hand,
if $\underset{x_{n}\rightarrow y_{0}}{\lim }M_{1}(x_{n},y_{0})=0$ then $%
d(x_{n},Tx_{n})\rightarrow 0$ as $x_{n}\rightarrow y_{0}$. This implies $%
Tx_{n}\rightarrow y_{0}=Ty_{0}$, that is, $T$ is continuous at $y_{0}$.
\end{proof}

\begin{remark}
Notice that the conditions $(1)$ and $(2)$ are not independent in Theorem %
\ref{thm1}. Indeed, in the cases where the condition $(2)$ is satisfied, we
obtain $d(Tx,Ty)<M_{1}(x,y)$, where $M_{1}(x,y)>0$. If $M_{1}(x,y)=0$ then $%
d(Tx,Ty)=0$. So the inequality $d(Tx,Ty)\leq \varepsilon $ holds for any $%
x,y\in X$ with $\varepsilon <M_{1}(x,y)<\varepsilon +\delta $.
\end{remark}

In the following example, we see that a self-mapping satisfying the
conditions of Theorem \ref{thm1} has a unique fixed point at which $T$ is
discontinuous.

\begin{example}
\label{exm1} Let $X=\left[ 0,4\right] $ and $d$ be the usual metric on $X$.
Let us define a self-mapping $T:X\rightarrow X$ by%
\begin{equation*}
Tx=\left\{
\begin{array}{ccc}
2 & ; & x\leq 2 \\
0 & ; & x>2%
\end{array}%
\right. \text{.}
\end{equation*}%
Then $T$ satisfies the conditions of Theorem \ref{thm1} and has a unique
fixed point $x=2$ at which $T$ is discontinuous. It can be verified in this
example that%
\begin{equation*}
d(Tx,Ty)=0\text{ and }0<M_{1}(x,y)\leq 4\text{ when }x,y\leq 2\text{,}
\end{equation*}%
\begin{equation*}
d(Tx,Ty)=0\text{ and }2<M_{1}(x,y)\leq 16\text{ when }x,y>2\text{,}
\end{equation*}%
\begin{equation*}
d(Tx,Ty)=2\text{ and }2<M_{1}(x,y)\leq 4\text{ when }x\leq 2\text{, }y>2
\end{equation*}%
and%
\begin{equation*}
d(Tx,Ty)=2\text{ and }2<M_{1}(x,y)\leq 4\text{ when }x>2\text{, }y\leq 2%
\text{.}
\end{equation*}%
Therefore the self-mapping $T$ satisfies the condition $(1)$ given in
Theorem \ref{thm1} with%
\begin{equation*}
\psi (t)=\left\{
\begin{array}{ccc}
2 & ; & t>2 \\
\frac{t}{2} & ; & t\leq 2%
\end{array}%
\right. \text{.}
\end{equation*}%
Also $T$ satisfies the condition $(2)$ given in Theorem \ref{thm1} with%
\begin{equation*}
\delta (\varepsilon )=\left\{
\begin{array}{ccc}
15 & ; & \varepsilon \geq 2 \\
5-\varepsilon & ; & \varepsilon <2%
\end{array}%
\right. \text{.}
\end{equation*}%
It can be easily checked that%
\begin{equation*}
\underset{x\rightarrow 2}{\lim }M_{1}(x,2)\neq 0\text{.}
\end{equation*}%
Consequently, $T$ is discontinuous at the fixed point $x=2$.
\end{example}

Now we give the following corollaries as the results of Theorem \ref{thm1}.

\begin{corollary}
\label{cor1} Let $(X,d)$ be a complete metric space and $T$ be a
self-mapping on $X$ satisfying the following conditions$:$

\begin{enumerate}
\item $d(Tx,Ty)\leq M_{1}(x,y)$ for any $x,y\in X$ with $M_{1}(x,y)>0;$

\item There exists a $\delta (\varepsilon )>0$ such that $\varepsilon
<M_{1}(x,y)<\varepsilon +\delta $ implies $d(Tx,Ty)\leq \varepsilon $ for a
given $\varepsilon >0$.
\end{enumerate}

Then $T$ has a unique fixed point $y_{0}\in X$ and $T^{n}x\rightarrow y_{0}$
for each $x\in X$. Also, $T$ is discontinuous at $y_{0}$ if and only if $%
\underset{x\rightarrow y_{0}}{\lim }M_{1}(x,y_{0})\neq 0$.
\end{corollary}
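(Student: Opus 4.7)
The plan is to derive Corollary \ref{cor1} by replaying the proof of Theorem \ref{thm1} almost verbatim, with the bound $d(Tx,Ty)\le \psi(M_1(x,y))$ of Theorem \ref{thm1} replaced by the strict inequality $d(Tx,Ty)<M_1(x,y)$ whenever $M_1(x,y)>0$. To justify this strict inequality, following the reasoning indicated in the Remark after Theorem \ref{thm1}, I would argue by contradiction: if $d(Tx,Ty)=M_1(x,y)$ for some pair with $M_1(x,y)>0$, then choosing $\varepsilon$ slightly below $M_1(x,y)$ so that $\varepsilon<M_1(x,y)<\varepsilon+\delta(\varepsilon)$ and applying hypothesis (2) yields $d(Tx,Ty)\le\varepsilon<M_1(x,y)$, a contradiction. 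The case $M_1(x,y)=0$ is handled trivially by hypothesis (1) together with the observation that all five entries of $M_1$ then vanish, forcing $x=Tx=y=Ty$ and hence $d(Tx,Ty)=0$.

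Once this strict inequality is available, the remainder of the proof mirrors Theorem \ref{thm1} step by step. For $x_0\in X$ with $x_0\ne Tx_0$ and the Picard iterates $x_{n+1}=Tx_n$, I would deduce $d(x_n,x_{n+1})<M_1(x_{n-1},x_n)=\max\{d(x_{n-1},x_n),d(x_n,x_{n+1})\}$, which forces $u_n:=d(x_n,x_{n+1})$ to be strictly decreasing to some limit $u\ge 0$; the same condition-(2) argument as in Theorem \ref{thm1} rules out $u>0$. The Jachymski-style induction that bounds $M_1(x_k,x_{k+n})<\varepsilon+\delta$ by estimating each of its five entries and then invokes hypothesis (2) carries through unchanged, so $\{x_n\}$ is Cauchy and converges in the complete space $X$ to some $y_0$. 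The identity $Ty_0=y_0$ follows by assuming $Ty_0\ne y_0$ and passing to the limit in $d(Ty_0,Tx_n)<M_1(y_0,x_n)$ to obtain the contradiction $d(y_0,Ty_0)<d(y_0,Ty_0)$; uniqueness follows from the same trick applied to a hypothetical second fixed point $z_0$, yielding $d(y_0,z_0)<M_1(y_0,z_0)=d(y_0,z_0)$. The equivalence of discontinuity of $T$ at $y_0$ with $\lim_{x\to y_0}M_1(x,y_0)\ne 0$ transfers verbatim from Theorem \ref{thm1}, since the corresponding argument there never invoked $\psi$.

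The main (and essentially only) obstacle is this implicit sharpening of hypothesis (1) into the strict inequality $d(Tx,Ty)<M_1(x,y)$ using hypothesis (2); once that step is made rigorous, the rest of the proof is bookkeeping that parallels Theorem \ref{thm1}.
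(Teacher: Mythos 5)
You have correctly identified both the intended route (the paper offers no written proof of this corollary; it is meant to follow by rerunning the proof of Theorem \ref{thm1}, whose argument only ever uses the strict inequality $d(Tx,Ty)<M_{1}(x,y)$ and never the specific function $\psi$) and the one nontrivial step, namely upgrading hypothesis (1) from $d(Tx,Ty)\leq M_{1}(x,y)$ to a strict inequality by means of hypothesis (2). This is exactly the argument suggested in the Remark following Theorem \ref{thm1}. Unfortunately, that step --- in your version and in the paper's Remark alike --- does not go through as written. To derive a contradiction from $d(Tx,Ty)=M_{1}(x,y)=m>0$ you must produce some $\varepsilon<m$ for which $m<\varepsilon+\delta(\varepsilon)$, so that the pair $(x,y)$ actually falls within the scope of hypothesis (2). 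But $\delta$ depends on $\varepsilon$, and nothing in the hypotheses prevents $\delta(\varepsilon)\leq m-\varepsilon$ for every $\varepsilon<m$ (for instance $\delta(\varepsilon)=(m-\varepsilon)/2$), in which case the window $(\varepsilon,\varepsilon+\delta(\varepsilon))$ never contains $m$ and hypothesis (2) says nothing about this pair. ``Choosing $\varepsilon$ slightly below $M_{1}(x,y)$'' therefore does not secure the inequality you need.

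The gap is not cosmetic: without the strict inequality the statement as literally written fails. Take $X=\{a,b\}$ with $d(a,b)=1$ and $Ta=b$, $Tb=a$. Then $M_{1}(x,y)=1$ for every pair, hypothesis (1) holds (with equality for the pair $(a,b)$), and hypothesis (2) holds vacuously with $\delta(\varepsilon)=(1-\varepsilon)/2$ for $\varepsilon<1$ and $\delta(\varepsilon)=1$ otherwise, since no pair then satisfies $\varepsilon<M_{1}(x,y)<\varepsilon+\delta(\varepsilon)$; yet $T$ has no fixed point. So strictness cannot be extracted from (1) and (2) as stated; one must either assume $d(Tx,Ty)<M_{1}(x,y)$ outright or restate (2) in Meir--Keeler form ($\varepsilon\leq M_{1}(x,y)<\varepsilon+\delta$ implies $d(Tx,Ty)<\varepsilon$), from which the strict inequality follows at once by taking $\varepsilon=M_{1}(x,y)$. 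With that repair, the remainder of your plan --- the monotonicity of $u_{n}$, the Jachymski induction, existence and uniqueness of the fixed point, and the (dis)continuity criterion --- does carry over from Theorem \ref{thm1} exactly as you describe.
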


\begin{corollary}
\label{cor2} Let $(X,d)$ be a complete metric space and $T$ be a
self-mapping on $X$ satisfying the following conditions$:$

\begin{enumerate}
\item There exists a function $\psi :%
%TCIMACRO{\U{211d} }%
%BeginExpansion
\mathbb{R}
%EndExpansion
^{+}\rightarrow
%TCIMACRO{\U{211d} }%
%BeginExpansion
\mathbb{R}
%EndExpansion
^{+}$ such that $\psi (d(x,y))<d(x,y)$ and $d(Tx,Ty)\leq \psi (d(x,y));$

\item There exists a $\delta (\varepsilon )>0$ such that $\varepsilon
<t<\varepsilon +\delta $ implies $\psi (t)\leq \varepsilon $ for any $t>0$
and a given $\varepsilon >0$.
\end{enumerate}

Then $T$ has a unique fixed point $y_{0}\in X$ and $T^{n}x\rightarrow y_{0}$
for each $x\in X$.
\end{corollary}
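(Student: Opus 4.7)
The plan is to adapt the proof of Theorem~\ref{thm1} to this simpler setting where $d(x,y)$ replaces $M_1(x,y)$, so that the rational terms drop out and every estimate becomes a one-liner. Fix $x_0 \in X$ with $x_0 \neq Tx_0$ and set $x_{n+1}=Tx_n$. Condition~(1) gives
\[
u_n := d(x_n,x_{n+1}) = d(Tx_{n-1},Tx_n) \leq \psi(d(x_{n-1},x_n)) < u_{n-1},
\]
so $\{u_n\}$ is strictly decreasing to some $u \geq 0$. To rule out $u > 0$, I would choose $n-1 \geq k$ with $u < u_{n-1} < u + \delta(u)$ so that condition~(2) gives $\psi(u_{n-1}) \leq u$, hence $u_n \leq u$, contradicting $u_n > u$. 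Thus $u_n \to 0$.

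Next I would show $\{x_n\}$ is Cauchy via the Jachymski-style induction from Theorem~\ref{thm1}. Fixing $\varepsilon > 0$ with $\delta := \delta(\varepsilon) < \varepsilon$, and choosing $k$ so that $u_n < \delta$ for $n \geq k$, the inductive claim is $d(x_k,x_{k+n}) < \varepsilon + \delta$ for every $n \geq 1$. The base case is immediate, and for the step the triangle inequality gives
\[
d(x_k,x_{k+n+1}) \leq d(x_k,x_{k+1}) + d(Tx_k,Tx_{k+n}) < \delta + d(Tx_k,Tx_{k+n}).
\]
A case split on whether $d(x_k,x_{k+n}) \leq \varepsilon$ (using $\psi(t) < t$ from condition~(1)) or $\varepsilon < d(x_k,x_{k+n}) < \varepsilon + \delta$ (using condition~(2)) forces $d(Tx_k,Tx_{k+n}) \leq \psi(d(x_k,x_{k+n})) \leq \varepsilon$ in either regime, completing the induction.

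Call the Cauchy limit $y_0$. Applying~(1) to the pair $(y_0,x_n)$ gives
\[
d(Ty_0,Tx_n) \leq \psi(d(y_0,x_n)) < d(y_0,x_n) \to 0,
\]
so $Tx_n = x_{n+1} \to Ty_0$ while also $x_{n+1} \to y_0$, forcing $Ty_0 = y_0$. Uniqueness is immediate: two distinct fixed points $y_0 \neq z_0$ would give $d(y_0,z_0) = d(Ty_0,Tz_0) \leq \psi(d(y_0,z_0)) < d(y_0,z_0)$, a contradiction. The one subtle point is the Jachymski case split, since $\psi$ is not assumed monotone and one therefore cannot simply bound $\psi(d(x_k,x_{k+n}))$ by $\psi(\varepsilon+\delta)$; splitting into the two regimes and invoking $\psi(t)<t$ on the small branch and condition~(2) on the large branch resolves this cleanly.
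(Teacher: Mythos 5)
Your proof is correct. The paper offers no separate argument for Corollary \ref{cor2}: it is simply listed as a consequence of Theorem \ref{thm1}, and what you have written is essentially that theorem's proof specialized to the case where $M_{1}(x,y)$ collapses to $d(x,y)$ --- the same Picard iteration, the same strictly decreasing distances $u_{n}$ with the Meir--Keeler-style argument forcing $u_{n}\rightarrow 0$, the same Jachymski induction for the Cauchy property, and the same fixed-point and uniqueness steps. Two remarks. First, your explicit case split in the inductive step (the branch $d(x_{k},x_{k+n})\leq \varepsilon$ handled by $\psi (t)<t$, the branch $\varepsilon <d(x_{k},x_{k+n})<\varepsilon +\delta $ handled by condition $(2)$) is genuinely needed because $\psi $ is not assumed monotone; the paper leaves this dichotomy implicit even in its proof of Theorem \ref{thm1}, so your version is if anything more careful than the source. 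Second, if one wanted to obtain the corollary literally as an instance of Theorem \ref{thm1} one would have to verify that its hypotheses, phrased in terms of $d(x,y)$, imply those phrased in terms of $M_{1}(x,y)$; since $d(x,y)\leq M_{1}(x,y)$ and $\psi $ need not be increasing, this reduction is itself not a one-line substitution (it goes through most smoothly via Corollary \ref{cor1}), which makes your self-contained rerun of the argument a reasonable, arguably cleaner, choice. The only cosmetic gaps are the degenerate cases $d(x_{n-1},x_{n})=0$ (the iteration terminates at a fixed point) and the extension from the chosen $x_{0}$ to arbitrary $x$ in the claim $T^{n}x\rightarrow y_{0}$, both of which are routine and are glossed over in the paper as well.
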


In the following theorem, we see that the power contraction of the type $%
M_{1}(x,y)$ allows the possibility of discontinuity at the fixed point.

\begin{theorem}
\label{thm4} Let $(X,d)$ be a complete metric space and $T$ be a
self-mapping on $X$ satisfying the following conditions$:$

\begin{enumerate}
\item There exists a function $\psi :%
%TCIMACRO{\U{211d} }%
%BeginExpansion
\mathbb{R}
%EndExpansion
^{+}\rightarrow
%TCIMACRO{\U{211d} }%
%BeginExpansion
\mathbb{R}
%EndExpansion
^{+}$ such that $\psi (t)<t$ for each $t>0$ and $d(T^{m}x,T^{m}y)\leq \psi
(M_{1}^{\ast }(x,y))$ where
\begin{equation*}
M_{1}^{\ast }(x,y)=\max \left\{
\begin{array}{c}
d(x,y),d(x,T^{m}x),d(y,T^{m}y), \\
\frac{d(x,T^{m}x)d(y,T^{m}y)}{1+d(x,y)},\frac{d(x,T^{m}x)d(y,T^{m}y)}{%
1+d(T^{m}x,T^{m}y)}%
\end{array}%
\right\} ;
\end{equation*}

\item There exists a $\delta (\varepsilon )>0$ such that $\varepsilon
<M_{1}^{\ast }(x,y)<\varepsilon +\delta $ implies $d(T^{m}x,T^{m}y)\leq
\varepsilon $ for a given $\varepsilon >0.$
\end{enumerate}

Then $T$ has a unique fixed point. Also, $T$ is discontinuous at $y_{0}$ if
and only if $\underset{x\rightarrow y_{0}}{\lim }M_{1}^{\ast }(x,y_{0})\neq
0 $.
\end{theorem}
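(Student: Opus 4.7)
The plan is to reduce to Theorem \ref{thm1} by applying it to the iterate $T^m$ in place of $T$. Observe that $M_1^*(x,y)$ is obtained from $M_1(x,y)$ by replacing every occurrence of $T$ with $T^m$, so hypotheses (1) and (2) here are exactly hypotheses (1) and (2) of Theorem \ref{thm1} for the self-mapping $T^m$ of the complete metric space $(X,d)$. Theorem \ref{thm1} then immediately produces a unique point $y_0 \in X$ with $T^m y_0 = y_0$, together with $(T^m)^n x \to y_0$ for every $x \in X$.

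The next step is to promote $y_0$ from a fixed point of $T^m$ to the unique fixed point of $T$. Since $T$ and $T^m$ commute, $T^m(Ty_0) = T(T^m y_0) = Ty_0$, so $Ty_0$ is again a fixed point of $T^m$; uniqueness for $T^m$ forces $Ty_0 = y_0$. Conversely, any fixed point of $T$ is a fortiori a fixed point of $T^m$, hence must coincide with $y_0$, giving uniqueness.

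For the discontinuity characterization, the strategy is to mimic the closing argument of the proof of Theorem \ref{thm1} with $T^m$ replacing $T$. In one direction, if $T$ is continuous at $y_0$ then, since $T^i y_0 = y_0$ for every $i$, the iterate $T^m$ is also continuous at $y_0$; taking $x_n \to y_0$ gives $T^m x_n \to y_0$, whence $d(x_n, T^m x_n) \to 0$ and the remaining four entries of $M_1^*(x_n, y_0)$ are either already zero or bounded by products of terms tending to zero, so $M_1^*(x_n, y_0) \to 0$. In the other direction, if $\lim_{x \to y_0} M_1^*(x, y_0) = 0$ then in particular $d(x, T^m x) \to 0$, which yields $T^m x \to y_0 = T^m y_0$. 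The main obstacle I anticipate is upgrading this continuity of $T^m$ at $y_0$ to continuity of $T$ at $y_0$: for a generic self-mapping this implication fails, so one has to lean on the contractive inequality (1) to control the behaviour of $T$ near $y_0$, or alternatively reinterpret the continuity statement in the theorem as being intrinsically about $T^m$ (which the argument above establishes without extra work).
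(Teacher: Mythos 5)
Your argument for the fixed-point part is exactly the paper's: apply Theorem \ref{thm1} to the self-mapping $T^{m}$, use $T^{m}(Ty_{0})=T(T^{m}y_{0})=Ty_{0}$ together with uniqueness of the fixed point of $T^{m}$ to conclude $Ty_{0}=y_{0}$, and note that any fixed point of $T$ is a fixed point of $T^{m}$. The obstacle you flag in the discontinuity characterization is real: the hypothesis $\lim_{x\to y_{0}}M_{1}^{\ast}(x,y_{0})=0$ only yields continuity of $T^{m}$ at $y_{0}$, not of $T$, and the paper's own proof is silent on this --- it proves only the fixed-point claim and never addresses the ``if and only if'' statement, so your honest identification of the gap is not a defect of your proposal but of the theorem as stated (the equivalence is correct only if ``$T$ is discontinuous at $y_{0}$'' is read as ``$T^{m}$ is discontinuous at $y_{0}$'').
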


\begin{proof}
Using Theorem \ref{thm1}, we see that the function $T^{m}$ has a unique
fixed point $y_{0}$, that is, $T^{m}y_{0}=y_{0}$. Hence we get%
\begin{equation*}
Ty_{0}=TT^{m}y_{0}=T^{m}Ty_{0}
\end{equation*}%
and so $Ty_{0}$ is a fixed point of $T^{m}$. From the uniqueness of the
fixed point, then we obtain $Ty_{0}=y_{0}$. Consequently, $T$ has a unique
fixed point.
\end{proof}

\begin{remark}
\label{rem1} Using the continuity of the self-mapping $T^{2}$ $($resp. the
continuity of the self-mapping $T^{p}$, the orbitally continuity of the
self-mapping $T)$ and the number $M_{1}(x,y)$, we can also give new
fixed-point results for this open question $($see \cite{Bisht-2017-1} and
\cite{Bisht-2017-2} for this approach$)$.
\end{remark}

We give another result of discontinuity at fixed point on a metric space.

\begin{theorem}
\label{thm2} Let $(X,d)$ be a complete metric space and $T$ be a
self-mapping on $X$ satisfying the following conditions$:$

\begin{enumerate}
\item There exists a function $\psi :%
%TCIMACRO{\U{211d} }%
%BeginExpansion
\mathbb{R}
%EndExpansion
^{+}\rightarrow
%TCIMACRO{\U{211d} }%
%BeginExpansion
\mathbb{R}
%EndExpansion
^{+}$ such that $\psi (t)<t$ for each $t>0$ and $d(Tx,Ty)\leq \frac{1}{2}%
\psi (M_{2}(x,y))$ where
\begin{equation*}
M_{2}(x,y)=\max \left\{ d(x,y),d(Tx,x),d(Ty,y),d(Tx,y),d(Ty,x)\right\} ;
\end{equation*}

\item There exists a $\delta (\varepsilon )>0$ such that $\varepsilon
<M_{2}(x,y)<\varepsilon +\delta $ implies $d(Tx,Ty)\leq \varepsilon $ for a
given $\varepsilon >0$.
\end{enumerate}

Then $T$ has a unique fixed point $y_{0}\in X$ and $T^{n}x\rightarrow y_{0}$
for each $x\in X$. Also, $T$ is discontinuous at $y_{0}$ if and only if $%
\underset{x\rightarrow y_{0}}{\lim }M_{2}(x,y_{0})\neq 0$.
\end{theorem}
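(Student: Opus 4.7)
My plan is to mirror the architecture of the proof of Theorem \ref{thm1}, with the main adjustment being the careful handling of the Rhoades-type entries $d(Tx,y)$ and $d(Ty,x)$ that now appear in $M_2$. Starting from any $x_0\in X$ with $Tx_0\neq x_0$, I would define the Picard iterates $x_{n+1}=Tx_n$ and set $u_n=d(x_n,x_{n+1})$. The first computation is
\[
M_2(x_{n-1},x_n)=\max\{u_{n-1},\,u_{n-1},\,u_n,\,0,\,d(x_{n+1},x_{n-1})\}\leq u_{n-1}+u_n,
\]
using $d(Tx_{n-1},x_n)=d(x_n,x_n)=0$ and $d(Tx_n,x_{n-1})\leq u_n+u_{n-1}$. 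The factor $\tfrac12$ in condition (1) then gives $u_n<\tfrac12(u_{n-1}+u_n)$, hence $u_n<u_{n-1}$, so $\{u_n\}$ is strictly decreasing; condition (2) rules out a positive limit by the same argument as in Theorem \ref{thm1}, so $u_n\to 0$.

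Next I would show that $\{x_n\}$ is Cauchy via the Jachymski-style induction (\cite{Jachymski-1994}, \cite{Jachymski-1995}). Fixing $\varepsilon>0$ and shrinking $\delta=\delta(\varepsilon)$ so that $\delta<\varepsilon/2$, I pick $k$ with $u_n<\delta$ for $n\geq k$, and prove $d(x_k,x_{k+n})<\varepsilon+\delta$ for all $n\geq1$ by induction on $n$. In the inductive step the two Rhoades-type entries of $M_2(x_k,x_{k+n})$ are estimated by
\[
d(x_{k+1},x_{k+n})\leq u_k+d(x_k,x_{k+n})<\varepsilon+2\delta,\qquad d(x_{k+n+1},x_k)<\varepsilon+2\delta,
\]
so $M_2(x_k,x_{k+n})\leq\varepsilon+2\delta$; condition (1) then yields $d(x_{k+1},x_{k+n+1})<\tfrac{\varepsilon}{2}+\delta<\varepsilon$, and a final triangle inequality closes the induction. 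Hence $x_n\to y_0$ for some $y_0\in X$, and applying the same reasoning to an arbitrary starting point gives $T^nx\to y_0$ for every $x\in X$.

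The remaining assertions then follow the template of Theorem \ref{thm1} closely. Assuming $Ty_0\neq y_0$, the five entries of $M_2(x_n,y_0)$ converge respectively to $0,\,0,\,d(Ty_0,y_0),\,0,\,d(Ty_0,y_0)$, so $M_2(x_n,y_0)\to d(Ty_0,y_0)$; condition (1) combined with $\psi(t)<t$ then forces $d(Ty_0,y_0)\leq \tfrac12 d(Ty_0,y_0)$, a contradiction. Uniqueness is analogous because $M_2(y_0,z_0)$ collapses to $d(y_0,z_0)$ for two distinct fixed points. For the discontinuity criterion, continuity of $T$ at $y_0$ forces every entry of $M_2(x_n,y_0)$ to $0$, while conversely $M_2(x_n,y_0)\to 0$ implies $d(x_n,Tx_n)\to 0$ and hence $Tx_n\to y_0=Ty_0$.

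The main technical obstacle, and exactly the reason condition (1) carries the factor $\tfrac12$, lies in the Rhoades-type entries $d(Tx,y)$ and $d(Ty,x)$: these are not naturally controlled by $d(x,y)$, $d(x,Tx)$, $d(y,Ty)$ and pick up extra triangle-inequality contributions. This is what compels $u_n<\tfrac12(u_{n-1}+u_n)$ in the monotonicity step and $\delta<\varepsilon/2$ in the Cauchy induction. Once that bookkeeping is handled, the remaining work is a direct transcription of the corresponding steps in Theorem \ref{thm1}.
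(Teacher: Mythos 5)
Your proposal reproduces the paper's own proof almost step for step: the monotonicity estimate $u_n<\tfrac12(u_{n-1}+u_n)$, the Jachymski-style Cauchy induction, the identifications $M_2(x_n,y_0)\to d(Ty_0,y_0)$ and $M_2(y_0,z_0)=d(y_0,z_0)$ for existence and uniqueness, and the continuity criterion are all exactly the paper's steps. The one genuine (and harmless) deviation is in the Cauchy induction, where you close the inductive step using only condition (1) together with $\delta<\varepsilon/2$, getting $d(x_{k+1},x_{k+n+1})<\tfrac12(\varepsilon+2\delta)<\varepsilon$; the paper instead arranges $M_2(x_k,x_{k+n})<\varepsilon+\delta$ and invokes condition (2). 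Your version is arguably cleaner, since it avoids the unstated case distinction ($M_2\leq\varepsilon$ versus $\varepsilon<M_2<\varepsilon+\delta$) that the paper's appeal to condition (2) requires.

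The step you should not wave through is ``condition (2) rules out a positive limit by the same argument as in Theorem \ref{thm1}.'' That argument relies on $M_1(x_{n-1},x_n)=u_{n-1}$ landing in the window $(u,u+\delta(u))$, so that condition (2) applies to the pair $(x_{n-1},x_n)$ and forces $u_n\leq u$, contradicting $u_n>u$. Here $M_2(x_{n-1},x_n)=\max\{u_{n-1},u_n,d(x_{n-1},x_{n+1})\}$, and the third entry is only bounded above by $u_{n-1}+u_n$, which tends to $2u$; in fact condition (1) already forces $M_2(x_{n-1},x_n)>2u_n>2u$, so unless $\delta(u)>u$ the quantity $M_2(x_{n-1},x_n)$ never enters the window $(u,u+\delta(u))$ and condition (2) cannot be applied to this pair. (The published proof has exactly the same lacuna.) The step can be repaired without condition (2) at all: every entry of $M_2(x_{n-2},x_n)$ is at most $3u_{n-2}$, so condition (1) gives $d(x_{n-1},x_{n+1})=d(Tx_{n-2},Tx_n)<\tfrac32 u_{n-2}$, hence $M_2(x_{n-1},x_n)\leq\tfrac32 u_{n-2}$ and $u_n<\tfrac34 u_{n-2}$, which yields $u_n\to0$ directly. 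With that substitution your proof is complete.
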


\begin{proof}
Let $x_{0}\in X$, $x_{0}\neq Tx_{0}$ and a sequence $\left\{ x_{n}\right\} $
be defined as $T^{n}x_{0}=Tx_{n}=x_{n+1}$ for all $n\in
%TCIMACRO{\U{2115} }%
%BeginExpansion
\mathbb{N}
%EndExpansion
\cup \left\{ 0\right\} $. Using the condition (1), we have%
\begin{eqnarray*}
d(x_{n},x_{n+1}) &=&d(Tx_{n-1},Tx_{n})\leq \frac{1}{2}\psi
(M_{2}(x_{n-1},x_{n}))<\frac{1}{2}M_{2}(x_{n-1},x_{n}) \\
&=&\frac{1}{2}\max \left\{
\begin{array}{c}
d(x_{n-1},x_{n}),d(x_{n},x_{n-1}),d(x_{n+1},x_{n}), \\
d(x_{n},x_{n}),d(x_{n+1},x_{n-1})%
\end{array}%
\right\} \\
&=&\frac{1}{2}\max \left\{
d(x_{n-1},x_{n}),d(x_{n+1},x_{n}),d(x_{n+1},x_{n-1})\right\} \\
&<&\frac{1}{2}\max \left\{
\begin{array}{c}
d(x_{n-1},x_{n})+d(x_{n+1},x_{n}),d(x_{n+1},x_{n})+d(x_{n-1},x_{n}), \\
d(x_{n+1},x_{n})+d(x_{n},x_{n-1})%
\end{array}%
\right\} \\
&=&\frac{1}{2}\left[ d(x_{n-1},x_{n})+d(x_{n+1},x_{n})\right]
\end{eqnarray*}%
and so%
\begin{equation}
2d(x_{n},x_{n+1})<d(x_{n-1},x_{n})+d(x_{n+1},x_{n})\text{.}  \label{eqn8}
\end{equation}%
Using the inequality (\ref{eqn8}), we get%
\begin{equation*}
d(x_{n},x_{n+1})<d(x_{n-1},x_{n})\text{.}
\end{equation*}%
If we put $d(x_{n},x_{n+1})=u_{n}$ then from the above inequality we obtain%
\begin{equation}
u_{n}<u_{n-1}\text{,}  \label{eqn9}
\end{equation}%
that is, $u_{n}$ is a strictly decreasing sequence of positive real numbers
and so the sequence $u_{n}$ tends to a limit $u\geq 0$.

Suppose that $u>0$. There exists a positive integer $k\in
%TCIMACRO{\U{2115} }%
%BeginExpansion
\mathbb{N}
%EndExpansion
$ such that $n\geq k$ implies%
\begin{equation}
u<u_{n}<u+\delta (u)\text{.}  \label{eqn10}
\end{equation}%
Using the condition (2) and the inequality (\ref{eqn9}), we get%
\begin{equation}
d(Tx_{n-1},Tx_{n})=d(x_{n},x_{n+1})=u_{n}<u\text{,}  \label{eqn11}
\end{equation}%
for $n\geq k$. The inequality (\ref{eqn11}) contradicts to the inequality (%
\ref{eqn10}). Thus it should be $u=0$.

Now we show that $\left\{ u_{n}\right\} $ is a Cauchy sequence. Let us fix
an $\varepsilon >0$. Without loss of generality, we can assume that $\delta
(\varepsilon )<\varepsilon $. There exists $k\in
%TCIMACRO{\U{2115} }%
%BeginExpansion
\mathbb{N}
%EndExpansion
$ such that%
\begin{equation*}
d(x_{n},x_{n+1})=u_{n}<\frac{\delta }{2}\text{,}
\end{equation*}%
for $n\geq k$ since $u_{n}\rightarrow 0$. Following Jachymski (see \cite%
{Jachymski-1994} and \cite{Jachymski-1995} for more details), using the
mathematical induction, we prove%
\begin{equation}
d(x_{k},x_{k+n})<\varepsilon +\frac{\delta }{2}\text{,}  \label{eqn12}
\end{equation}%
for any $n\in
%TCIMACRO{\U{2115} }%
%BeginExpansion
\mathbb{N}
%EndExpansion
$. The inequality (\ref{eqn12}) holds for $n=1$ since%
\begin{equation*}
d(x_{k},x_{k+1})=u_{k}<\frac{\delta }{2}<\varepsilon +\frac{\delta }{2}\text{%
.}
\end{equation*}%
Assume that the inequality (\ref{eqn12}) is true for some $n$. We prove it
for $n+1$. Using the triangle inequality, we have%
\begin{equation*}
d(x_{k},x_{k+n+1})\leq d(x_{k},x_{k+1})+d(x_{k+1},x_{k+n+1})\text{.}
\end{equation*}%
It suffices to show $d(x_{k+1},x_{k+n+1})\leq \varepsilon $. To do this, we
prove $M_{2}(x_{k},x_{k+n})\leq \varepsilon +\delta $, where%
\begin{eqnarray}
M_{2}(x_{k},x_{k+n}) &=&\max \left\{
\begin{array}{c}
d(x_{k},x_{k+n}),d(Tx_{k},x_{k}),d(Tx_{k+n},x_{k+n}), \\
d(Tx_{k},x_{k+n}),d(Tx_{k+n},x_{k})%
\end{array}%
\right\}  \notag \\
&=&\max \left\{
\begin{array}{c}
d(x_{k},x_{k+n}),d(x_{k+1},x_{k}),d(x_{k+n+1},x_{k+n}), \\
d(x_{k+1},x_{k+n}),d(x_{k+n+1},x_{k})%
\end{array}%
\right\}  \label{eqn13} \\
&\leq &\max \left\{
\begin{array}{c}
d(x_{k},x_{k+n}),d(x_{k},x_{k+1}),d(x_{k+n},x_{k+n+1}), \\
d(x_{k},x_{k+1})+d(x_{k},x_{k+n}),d(x_{k+n},x_{k+n+1})+d(x_{k},x_{k+n})%
\end{array}%
\right\} \text{.}  \notag
\end{eqnarray}%
Using the mathematical induction hypothesis, we get%
\begin{equation}
\begin{array}{l}
d(x_{k},x_{k+n})<\varepsilon +\frac{\delta }{2}\text{,} \\
d(x_{k},x_{k+1})<\frac{\delta }{2}\text{,} \\
d(x_{k+n},x_{k+n+1})<\frac{\delta }{2}\text{,} \\
d(x_{k},x_{k+1})+d(x_{k},x_{k+n})<\varepsilon +\delta \text{,} \\
\frac{d(x_{k+n},x_{k+n+1})+d(x_{k},x_{k+n})}{2}<\varepsilon +\delta \text{.}%
\end{array}
\label{eqn14}
\end{equation}%
Using the conditions (\ref{eqn13}) and (\ref{eqn14}), we have $%
M_{2}(x_{k},x_{k+n})<\varepsilon +\delta $. From the condition (2), we obtain%
\begin{equation*}
d(Tx_{k},Tx_{k+n})=d(x_{k+1},x_{k+n+1})\leq \varepsilon \text{.}
\end{equation*}%
Therefore, the inequality (\ref{eqn12}) implies that $\left\{ x_{n}\right\} $
is Cauchy. Since $(X,d)$ is a complete metric space, there exists a point $%
y_{0}\in X$ such that $x_{n}\rightarrow y_{0}$ as $n\rightarrow \infty $.
Also we get $Tx_{n}\rightarrow y_{0}$.

Now we show that $Ty_{0}=y_{0}$. On the contrary, $y_{0}$ is not a fixed
point of $T$, that is, $Ty_{0}\neq y_{0}$. Then using the condition (1), we
get%
\begin{eqnarray*}
d(Ty_{0},Tx_{n}) &\leq &\frac{1}{2}\psi (M_{2}(y_{0},x_{n}))<\frac{1}{2}%
M_{2}(y_{0},x_{n}) \\
&=&\frac{1}{2}\max \left\{
\begin{array}{c}
d(y_{0},x_{n}),d(Ty_{0},y_{0}),d(Tx_{n},x_{n}), \\
d(Ty_{0},x_{n}),d(Tx_{n},y_{0})%
\end{array}%
\right\}
\end{eqnarray*}%
and so taking limit for $n\rightarrow \infty $ we have%
\begin{equation*}
d(Ty_{0},y_{0})<\frac{1}{2}d(Ty_{0},y_{0})\text{,}
\end{equation*}%
which is a contradiction. Thus $y_{0}$ is a fixed point of $T$. We prove
that the fixed point $y_{0}$ is unique. Let $z_{0}$ be another fixed point
of $T$ such that $y_{0}\neq z_{0}$. From the condition (1), we find%
\begin{eqnarray*}
d(Ty_{0},Tz_{0}) &=&d(y_{0},z_{0})\leq \frac{1}{2}\psi (M_{2}(y_{0},z_{0}))<%
\frac{1}{2}M_{2}(y_{0},z_{0}) \\
&=&\frac{1}{2}\max \left\{
\begin{array}{c}
d(y_{0},z_{0}),d(y_{0},y_{0}),d(z_{0},z_{0}), \\
d(y_{0},z_{0}),d(z_{0},y_{0})%
\end{array}%
\right\} \\
&=&\frac{1}{2}d(y_{0},z_{0})\text{,}
\end{eqnarray*}%
which is a contradiction. Hence $y_{0}$ is a unique fixed point of $T$.

Finally, we prove that $T$ is discontinuous at $y_{0}$ if and only if $%
\underset{x\rightarrow y_{0}}{\lim }M_{2}(x,y_{0})\neq 0$. To do this, we
show that $T$ is continuous at $y_{0}$ if and only if $\underset{%
x\rightarrow y_{0}}{\lim }M_{2}(x,y_{0})=0$. Let $T$ be continuous at the
fixed point $y_{0}$ and $x_{n}\rightarrow y_{0}$. Then $Tx_{n}\rightarrow
Ty_{0}=y_{0}$ and%
\begin{equation*}
d(x_{n},Tx_{n})\leq d(x_{n},y_{0})+d(Tx_{n},y_{0})\rightarrow 0\text{.}
\end{equation*}%
Hence we get $\underset{n}{\lim }M_{2}(x_{n},y_{0})=0$. On the other hand,
if $\underset{x_{n}\rightarrow y_{0}}{\lim }M_{2}(x_{n},y_{0})=0$ then $%
d(x_{n},Tx_{n})\rightarrow 0$ as $x_{n}\rightarrow y_{0}$. This implies $%
Tx_{n}\rightarrow y_{0}=Ty_{0}$, that is, $T$ is continuous at $y_{0}$.
\end{proof}

In the following example, we see that the self-mapping satisfying the
conditions of Theorem \ref{thm2} has a unique fixed point at which $T$ is
continuous.

\begin{example}
\label{exm2} Let $X=\left[ 0,4\right] $ and $d$ be the usual metric on $X$.
Let us define a self-mapping $T:X\rightarrow X$ by%
\begin{equation*}
Tx=2\text{,}
\end{equation*}%
for all $x\in X$. Then $T$ satisfies the conditions of Theorem \ref{thm2}
and has a unique fixed point $x=2$ at which $T$ is continuous. It can be
verified in this example that%
\begin{equation*}
d(Tx,Ty)=0\text{ and }0\leq M_{2}(x,y)\leq 4\text{ when }x,y\in X\text{.}
\end{equation*}%
Therefore the self-mapping $T$ satisfies the condition $(1)$ given in
Theorem \ref{thm2} with%
\begin{equation*}
\psi (t)=\frac{t}{2}\text{.}
\end{equation*}%
Also $T$ satisfies the condition $(2)$ given in Theorem \ref{thm2} with%
\begin{equation*}
\delta (\varepsilon )=5-\varepsilon \text{.}
\end{equation*}%
It can be easily checked that%
\begin{equation*}
\underset{x\rightarrow 2}{\lim }M_{2}(x,2)=0\text{.}
\end{equation*}%
Consequently, $T$ is continuous at the fixed point $x=2$.
\end{example}

Now we give the following corollaries as the results of Theorem \ref{thm2}.

\begin{corollary}
\label{cor3} Let $(X,d)$ be a complete metric space and $T$ be a
self-mapping on $X$ satisfying the following conditions$:$

\begin{enumerate}
\item $d(Tx,Ty)\leq \frac{M_{2}(x,y)}{2}$ for any $x,y\in X$ with $%
M_{2}(x,y)>0;$

\item There exists a $\delta (\varepsilon )>0$ such that $\varepsilon
<M_{2}(x,y)<\varepsilon +\delta $ implies $d(Tx,Ty)\leq \varepsilon $ for a
given $\varepsilon >0$.
\end{enumerate}

Then $T$ has a unique fixed point $y_{0}\in X$ and $T^{n}x\rightarrow y_{0}$
for each $x\in X$. Also, $T$ is discontinuous at $y_{0}$ if and only if $%
\underset{x\rightarrow y_{0}}{\lim }M_{2}(x,y_{0})\neq 0$.
\end{corollary}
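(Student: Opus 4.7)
The idea is to mirror the proof of Theorem \ref{thm2}, with condition (1) of the corollary---the non-strict inequality $d(Tx,Ty)\leq M_{2}(x,y)/2$ whenever $M_{2}(x,y)>0$---playing the role of the strict $\psi$-contraction used there. Condition (2) is identical in both statements and is what drives the sharp quantitative estimates in the Cauchy argument, so almost all of the machinery transfers without change.

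In detail, I would fix $x_{0}\in X$ with $x_{0}\neq Tx_{0}$, set $x_{n+1}=Tx_{n}$ and $u_{n}:=d(x_{n},x_{n+1})$. Applying condition (1) together with the triangle-inequality bound $M_{2}(x_{n-1},x_{n})\leq u_{n-1}+u_{n}$ yields $u_{n}\leq(u_{n-1}+u_{n})/2$, so $\{u_{n}\}$ is non-increasing with limit $u\geq 0$; if $u>0$, condition (2) produces the same contradiction as in Theorem \ref{thm2}, hence $u_{n}\to 0$. The Jachymski-style induction from Theorem \ref{thm2}---bounding $M_{2}(x_{k},x_{k+n})$ via the triangle inequality in terms of $d(x_{k},x_{k+1})$, $d(x_{k+n},x_{k+n+1})$ and $d(x_{k},x_{k+n})$, and then invoking condition (2)---then shows that $\{x_{n}\}$ is Cauchy, so $x_{n}\to y_{0}$ for some $y_{0}\in X$ by completeness.

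To verify $Ty_{0}=y_{0}$, I would apply condition (1) to the pair $(y_{0},x_{n})$; since $M_{2}(y_{0},x_{n})\to d(Ty_{0},y_{0})$ and $d(Ty_{0},Tx_{n})=d(Ty_{0},x_{n+1})\to d(Ty_{0},y_{0})$ as $n\to\infty$, the limit gives $d(Ty_{0},y_{0})\leq d(Ty_{0},y_{0})/2$, forcing $Ty_{0}=y_{0}$. Uniqueness is then immediate: for any second fixed point $z_{0}$ one computes $M_{2}(y_{0},z_{0})=d(y_{0},z_{0})$, and condition (1) yields $d(y_{0},z_{0})\leq d(y_{0},z_{0})/2$, whence $y_{0}=z_{0}$. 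The discontinuity characterization---that $T$ is discontinuous at $y_{0}$ if and only if $\lim_{x\to y_{0}}M_{2}(x,y_{0})\neq 0$---follows exactly as at the end of Theorem \ref{thm2}, using the estimate $d(x,Tx)\leq d(x,y_{0})+d(Tx,y_{0})$.

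The one genuinely delicate step is the passage from the strict $\psi$-contraction of Theorem \ref{thm2} to the non-strict factor $1/2$ here. In Theorem \ref{thm2} the sequence $\{u_{n}\}$ is shown to be strictly decreasing, and that strictness feeds into the use of condition (2) that rules out $u>0$. Under the corollary's hypotheses one has only $u_{n}\leq u_{n-1}$ a priori, so one must combine condition (2) with the observation that $M_{2}(x_{n-1},x_{n})\geq 2u_{n}$ is strictly positive and bounded away from $0$ whenever $u>0$, in order to still force the required contradiction. Once this step is handled, the remainder of the argument is a routine transcription of Theorem \ref{thm2}.
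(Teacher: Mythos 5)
Your overall plan---rerunning the proof of Theorem \ref{thm2} with the non-strict bound $d(Tx,Ty)\leq M_{2}(x,y)/2$ in place of the strict $\tfrac{1}{2}\psi (M_{2}(x,y))$---is the natural one: the paper supplies no separate proof and simply presents the corollary as a consequence of Theorem \ref{thm2}, and you are right that it is not a literal specialization of that theorem, so the argument must be retraced. Most of your transcription is sound: the monotonicity $u_{n}\leq u_{n-1}$ via $M_{2}(x_{n-1},x_{n})\leq u_{n-1}+u_{n}$, the Jachymski-type induction for the Cauchy property, the limit computation giving $d(Ty_{0},y_{0})\leq \tfrac{1}{2}d(Ty_{0},y_{0})$, uniqueness, and the continuity characterization all survive the loss of strictness.

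The gap is precisely at the step you yourself flag as delicate, and your sketch does not close it. With only $u_{n}\leq u_{n-1}$ you get $u_{n}\geq u$ for every $n$, so the conclusion $d(Tx_{n-1},Tx_{n})=u_{n}\leq \varepsilon $ delivered by condition $(2)$ with $\varepsilon =u$ is \emph{not} a contradiction: it is perfectly consistent with the stationary scenario $u_{n}\equiv u>0$. (In Theorem \ref{thm2} the strictness of $\psi $ forces $u_{n}>u$, and it is that strict inequality which collides with $u_{n}\leq u$.) Moreover, your own observation that condition $(1)$ forces $M_{2}(x_{n-1},x_{n})\geq 2u_{n}\geq 2u$ works \emph{against} you rather than for you: condition $(2)$ with parameter $\varepsilon $ yields information only when $M_{2}(x_{n-1},x_{n})$ falls inside the window $(\varepsilon ,\varepsilon +\delta (\varepsilon ))$, and to conclude $u_{n}<u$ you would need some $\varepsilon <u$ with $\varepsilon +\delta (\varepsilon )>2u$, i.e.\ $\delta (\varepsilon )>u$, which the hypothesis does not guarantee. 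Indeed $M_{2}(x_{n-1},x_{n})=\max \left\{ u_{n-1},u_{n},d(x_{n-1},x_{n+1})\right\} $ is squeezed between $2u_{n}$ and $u_{n-1}+u_{n}$, hence tends to $2u$ from above, so no window based at $\varepsilon \leq u$ need ever capture it. As written, ``the same contradiction as in Theorem \ref{thm2}'' is simply not available; you need a genuinely new argument to exclude $u>0$ (in particular to rule out $u_{n}$ being eventually constant), or else you must retain some form of strictness in hypothesis $(1)$.
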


\begin{corollary}
\label{cor4} Let $(X,d)$ be a complete metric space and $T$ be a
self-mapping on $X$ satisfying the following conditions$:$

\begin{enumerate}
\item There exists a function $\psi :%
%TCIMACRO{\U{211d} }%
%BeginExpansion
\mathbb{R}
%EndExpansion
^{+}\rightarrow
%TCIMACRO{\U{211d} }%
%BeginExpansion
\mathbb{R}
%EndExpansion
^{+}$ such that $\psi (d(x,y))<d(x,y)$ and $d(Tx,Ty)\leq \frac{1}{2}\psi
(d(x,y))$ $;$

\item There exists a $\delta (\varepsilon )>0$ such that $\varepsilon
<t<\varepsilon +\delta $ implies $\psi (t)\leq \varepsilon $ for any $t>0$
and a given $\varepsilon >0$.
\end{enumerate}

Then $T$ has a unique fixed point $y_{0}\in X$ and $T^{n}x\rightarrow y_{0}$
for each $x\in X$.
\end{corollary}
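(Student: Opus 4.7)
The plan is to derive the corollary from Corollary \ref{cor3} rather than to repeat the Picard iteration of Theorem \ref{thm2}. Since $d(x,y)$ appears as one of the five entries in the maximum defining $M_{2}(x,y)$, we automatically have $d(x,y)\le M_{2}(x,y)$, and combining this with condition (1) gives
\begin{equation*}
d(Tx,Ty)\le \tfrac{1}{2}\psi(d(x,y))<\tfrac{1}{2}d(x,y)\le \tfrac{1}{2}M_{2}(x,y)
\end{equation*}
whenever $d(x,y)>0$, which is precisely (a strict form of) condition (1) of Corollary \ref{cor3}; the case $d(x,y)=0$ forces $x=y$ and therefore $d(Tx,Ty)=0$, so it is trivial.

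The only step that requires any care, and hence the main ``obstacle,'' is the transfer of the gap condition. Given $\varepsilon>0$, I would take the $\delta=\delta(\varepsilon)$ supplied here and show that $\varepsilon<M_{2}(x,y)<\varepsilon+\delta$ forces $d(Tx,Ty)\le \varepsilon$. I would split on the size of $d(x,y)$: if $d(x,y)>\varepsilon$, then $d(x,y)\le M_{2}(x,y)<\varepsilon+\delta$ and condition (2) applied to $t=d(x,y)$ yields $\psi(d(x,y))\le \varepsilon$; if $0<d(x,y)\le \varepsilon$, the hypothesis $\psi(d(x,y))<d(x,y)$ already gives $\psi(d(x,y))<\varepsilon$; and if $d(x,y)=0$ then $d(Tx,Ty)=0$. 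In each case $d(Tx,Ty)\le \tfrac{1}{2}\varepsilon\le \varepsilon$, so the gap hypothesis of Corollary \ref{cor3} is satisfied.

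With both hypotheses of Corollary \ref{cor3} verified, its conclusion supplies the unique fixed point $y_{0}$ together with the convergence $T^{n}x\to y_{0}$ for every $x\in X$. I do not foresee any deeper obstruction, since all that is really being used is the trivial inequality $d(x,y)\le M_{2}(x,y)$ together with the elementary case split above. A more direct route is also available: condition (1) already gives $d(Tx,Ty)<\tfrac{1}{2}d(x,y)$, so $T$ is a Banach contraction with constant $\tfrac{1}{2}$ and the conclusion follows from the classical fixed-point theorem; the reduction to Corollary \ref{cor3} is preferred only because it keeps the proof inside the framework already developed in the paper.
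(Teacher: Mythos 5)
Your argument is correct. The paper itself offers no written proof of Corollary \ref{cor4}; it is simply presented as a consequence of Theorem \ref{thm2}, so the comparison is with that implicit reduction. Your route through Corollary \ref{cor3} is a clean way to make the reduction precise: the trivial bound $d(x,y)\le M_{2}(x,y)$ gives condition (1) of Corollary \ref{cor3} immediately, and your three-way case split on the size of $d(x,y)$ correctly transfers the gap condition --- this is genuinely the only delicate point, because $\psi$ is not assumed monotone, so one cannot just replace $\psi(d(x,y))$ by $\psi(M_{2}(x,y))$ and land directly in the hypotheses of Theorem \ref{thm2}. Your closing observation is in fact the heart of the matter: condition (1) alone already yields $d(Tx,Ty)\le\tfrac{1}{2}\psi(d(x,y))<\tfrac{1}{2}d(x,y)$ for $x\ne y$, so $T$ is a Banach contraction with constant $\tfrac{1}{2}$ and the conclusion follows from the classical contraction principle without invoking condition (2) at all; this also explains why $T$ is automatically continuous everywhere and why the discontinuity clause present in Corollary \ref{cor3} is absent here. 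Either route is valid: the reduction to Corollary \ref{cor3} keeps the argument inside the framework the paper develops, while the Banach shortcut exposes that the hypotheses of this corollary are considerably stronger than the conclusion requires.
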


In the following theorem, we can see that the power contraction of the type $%
M_{2}(x,y)$ allows the possibility of discontinuity at the fixed point.

\begin{theorem}
\label{thm6} Let $(X,d)$ be a complete metric space and $T$ be a
self-mapping on $X$ satisfying the following conditions$:$

\begin{enumerate}
\item There exists a function $\psi :%
%TCIMACRO{\U{211d} }%
%BeginExpansion
\mathbb{R}
%EndExpansion
^{+}\rightarrow
%TCIMACRO{\U{211d} }%
%BeginExpansion
\mathbb{R}
%EndExpansion
^{+}$ such that $\psi (t)<t$ for each $t>0$ and $d(T^{m}x,T^{m}y)\leq \frac{1%
}{2}\psi (M_{2}^{\ast }(x,y))$ where%
\begin{equation*}
M_{2}^{\ast }(x,y)=\max \left\{
d(x,y),d(T^{m}x,x),d(T^{m}y,y),d(T^{m}x,y),d(T^{m}y,x)\right\} ;
\end{equation*}

\item There exists a $\delta (\varepsilon )>0$ such that $\varepsilon
<M_{2}^{\ast }(x,y)<\varepsilon +\delta $ implies $d(T^{m}x,T^{m}y)\leq
\varepsilon $ for a given $\varepsilon >0.$
\end{enumerate}

Then $T$ has a unique fixed point. Also, $T$ is discontinuous at $y_{0}$ if
and only if $\underset{x\rightarrow y_{0}}{\lim }M_{2}^{\ast }(x,y_{0})\neq
0 $.
\end{theorem}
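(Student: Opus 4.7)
The plan is to mirror the proof of Theorem \ref{thm4}, reducing the assertion for $T$ to the result already established in Theorem \ref{thm2}, applied to the iterate $T^{m}$. Notice that conditions (1) and (2) of Theorem \ref{thm6} are, by inspection, precisely the hypotheses of Theorem \ref{thm2} rewritten with the self-mapping $T$ replaced by $T^{m}$: the number $M_{2}^{\ast}(x,y)$ is literally $M_{2}(x,y)$ computed with $T^{m}$ in place of $T$, and the contractive and $\varepsilon$-$\delta$ inequalities have the same shape. So the first step is simply to invoke Theorem \ref{thm2} for the self-mapping $T^{m}:X\to X$ and conclude that $T^{m}$ admits a unique fixed point $y_{0}\in X$, with $(T^{m})^{n}x\to y_{0}$ for every $x\in X$, and that the discontinuity characterization in terms of $\lim_{x\to y_{0}}M_{2}^{\ast}(x,y_{0})$ holds for $T^{m}$.

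Next I would upgrade this fixed point from $T^{m}$ to $T$ by the standard iterate trick. Applying $T$ to the equation $T^{m}y_{0}=y_{0}$ gives
\begin{equation*}
T y_{0} = T(T^{m} y_{0}) = T^{m}(T y_{0}),
\end{equation*}
so $Ty_{0}$ is itself a fixed point of $T^{m}$. Uniqueness from Theorem \ref{thm2} forces $Ty_{0}=y_{0}$, so $y_{0}$ is a fixed point of $T$. Any fixed point of $T$ is automatically a fixed point of $T^{m}$, so uniqueness for $T^{m}$ transfers verbatim to $T$.

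For the discontinuity clause I would recycle the argument given at the end of the proof of Theorem \ref{thm2}. If $T$ is continuous at $y_{0}$ and $x_{n}\to y_{0}$, then $Tx_{n}\to Ty_{0}=y_{0}$, and hence by the triangle inequality $d(x_{n},T^{m}x_{n})\to 0$ (iterating continuity, or equivalently using that $T^{m}$ is continuous at $y_{0}$ whenever $T$ is), which drives every term in $M_{2}^{\ast}(x_{n},y_{0})$ to zero. Conversely, if $\lim_{x\to y_{0}}M_{2}^{\ast}(x,y_{0})=0$ then, in particular, $d(x_{n},T^{m}x_{n})\to 0$ whenever $x_{n}\to y_{0}$, so $T^{m}x_{n}\to y_{0}=T^{m}y_{0}$; combined with the contractive inequality $d(Tx_{n},Ty_{0})\le \tfrac{1}{2}\psi(M_{2}^{\ast}(x_{n},y_{0})$ applied to nearby points (or via the same routine estimate used for $T^{m}$) this yields $Tx_{n}\to y_{0}$, so $T$ is continuous at $y_{0}$. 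Contrapositives give the stated ``if and only if''.

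The substantive part of the work is entirely hidden inside Theorem \ref{thm2}; the only real issue is the bookkeeping step of transferring the fixed point from $T^{m}$ to $T$, which is handled in one line by the uniqueness argument above. I would expect the mildly delicate point to be justifying the discontinuity equivalence \emph{for} $T$ (not just for $T^{m}$), which is why I plan to spell out the triangle-inequality reduction relating $d(x_{n},T^{m}x_{n})$ to the quantities appearing in $M_{2}^{\ast}(x_{n},y_{0})$.
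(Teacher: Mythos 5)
Your fixed-point argument is exactly the paper's proof: invoke Theorem \ref{thm2} for the self-mapping $T^{m}$ (whose hypotheses are conditions (1) and (2) with $M_{2}^{\ast}$ playing the role of $M_{2}$), obtain the unique fixed point $y_{0}$ of $T^{m}$, observe $Ty_{0}=T(T^{m}y_{0})=T^{m}(Ty_{0})$ so that $Ty_{0}$ is a fixed point of $T^{m}$, and conclude $Ty_{0}=y_{0}$ by uniqueness. That part is correct and is the entirety of what the paper writes down.

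Where you go beyond the paper is the discontinuity clause, and there one step does not hold up. The forward direction is fine: continuity of $T$ at $y_{0}$ gives continuity of $T^{m}$ at $y_{0}$, hence $d(x_{n},T^{m}x_{n})\to 0$ and $\lim M_{2}^{\ast}(x_{n},y_{0})=0$. But in the converse you write $d(Tx_{n},Ty_{0})\leq \tfrac{1}{2}\psi\bigl(M_{2}^{\ast}(x_{n},y_{0})\bigr)$, which is not a hypothesis of the theorem: condition (1) only bounds $d(T^{m}x,T^{m}y)$, so from $\lim_{x\to y_{0}}M_{2}^{\ast}(x,y_{0})=0$ you can only conclude that $T^{m}$ is continuous at $y_{0}$, and continuity of an iterate at a point does not in general imply continuity of $T$ there. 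So this direction of the equivalence is not established by your argument. To be fair, the paper's own proof is silent on the discontinuity clause altogether, so the gap is inherited from (indeed, already present in) the source; but as written your justification of that clause is not valid, and you should either restrict the claim to discontinuity of $T^{m}$ or supply an additional hypothesis linking the continuity of $T$ to that of $T^{m}$.
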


\begin{proof}
Using Theorem \ref{thm2}, we see that the function $T^{m}$ has a unique
fixed point $y_{0}$, that is, $T^{m}y_{0}=y_{0}$. Hence we get%
\begin{equation*}
Ty_{0}=TT^{m}y_{0}=T^{m}Ty_{0}
\end{equation*}%
and so $Ty_{0}$ is a fixed point of $T^{m}$. From the uniqueness of the
fixed point, then we obtain $Ty_{0}=y_{0}$. Consequently, $T$ has a unique
fixed point.
\end{proof}

\begin{remark}
\label{rem2} Using the continuity of the self-mapping $T^{2}$ $($resp. the
continuity of the self-mapping $T^{p}$, the orbitally continuity of the
self-mapping $T)$ and the number $M_{2}(x,y)$, we can also give new
fixed-point results for this open question $($see \cite{Bisht-2017-1} and
\cite{Bisht-2017-2} for this approach$)$.
\end{remark}

\section{\textbf{An Application of the Main Results to Discontinuous
Activation Functions in Neural Networks}}

\label{sec:2} Discontinuous activation functions in neural networks have
been become important and frequently do arise in practise (see \cite%
{Forti-2003} and \cite{Nie-activation} for more details). In this section,
we give an application of the results obtained in Section \ref{sec:1} to
discontinuous activation functions. Recently, this topic has been
extensively studied.

In \cite{Wang-Mexican}, the multistability analysis was investigated for
neural networks with a class of continuous (but not monotonically
increasing) Mexican-hat-type activation functions defined by%
\begin{equation}
T_{i}x=\left\{
\begin{array}{ccc}
m_{i} & ; & -\infty <x<p_{i} \\
l_{i,1}x+c_{i,1} & ; & p_{i}\leq x\leq r_{i} \\
l_{i,2}x+c_{i,2} & ; & r_{i}<x\leq q_{i} \\
m_{i} & ; & q_{i}<x<+\infty%
\end{array}%
\right. \text{,}  \label{eqn15}
\end{equation}%
where $p_{i},$ $r_{i},$ $q_{i},$ $m_{i},$ $l_{i,1},$ $l_{i,2},$ $c_{i,1}$
and $c_{i,2}$ are constants with $-\infty <p_{i}<r_{i}<q_{i}<+\infty $, $%
l_{i,1}>0$ and $l_{i,2}<0$, $i=1,2,\ldots ,n$.

In \cite{Nie-activation}, with the inspiration from the continuous
Mexican-hat-type activation function (\ref{eqn15}), a general class of
discontinuous activation functions was defined by%
\begin{equation}
T_{i}x=\left\{
\begin{array}{ccc}
u_{i} & ; & -\infty <x<p_{i} \\
l_{i,1}x+c_{i,1} & ; & p_{i}\leq x\leq r_{i} \\
l_{i,2}x+c_{i,2} & ; & r_{i}<x\leq q_{i} \\
v_{i} & ; & q_{i}<x<+\infty%
\end{array}%
\right. \text{,}  \label{eqn16}
\end{equation}%
where $p_{i},$ $r_{i},$ $q_{i},$ $u_{i},$ $v_{i},$ $l_{i,1},$ $l_{i,2},$ $%
c_{i,1}$ and $c_{i,2}$ are constants with $-\infty
<p_{i}<r_{i}<q_{i}<+\infty $, $l_{i,1}>0$, $l_{i,2}<0$, $%
u_{i}=l_{i,1}p_{i}+c_{i,1}=l_{i,2}q_{i}+c_{i,2}$, $%
l_{i,1}r_{i}+c_{i,1}=l_{i,2}r_{i}+c_{i,2}$, $v_{i}>T_{i}r_{i}$, $%
i=1,2,\ldots ,n$. It can be easily seen that the function $T_{i}x$ is
continuous in $%
%TCIMACRO{\U{211d} }%
%BeginExpansion
\mathbb{R}
%EndExpansion
$ except the point of discontinuity $x=q_{i}$. Then, it was studied the
problem of multistability of competitive neural networks with discontinuous
activation functions (see \cite{Nie-activation} for more details).

\begin{figure}[h]
\centering
\includegraphics{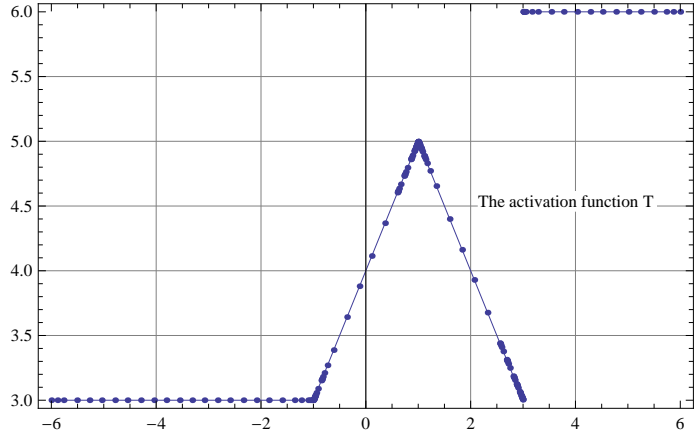}
\caption{The graphic of the discontinuous activation function given in (%
\protect\ref{eqn17}).}
\label{fig:exm1}
\end{figure}

To obtain an application of our results given in the previous section, now
we take
\begin{equation*}
\begin{array}{l}
p_{i}=-1,r_{i}=1,q_{i}=3, \\
u_{i}=3,v_{i}=6,l_{i,1}=1, \\
c_{i,1}=4,l_{i,2}=-1,c_{i,2}=6\text{,}%
\end{array}%
\end{equation*}%
in (\ref{eqn16}) to get the following discontinuous activation function:%
\begin{equation}
Tx=\left\{
\begin{array}{ccc}
3 & ; & -\infty <x<-1 \\
x+4 & ; & -1\leq x\leq 1 \\
-x+6 & ; & 1<x\leq 3 \\
6 & ; & 3<x<+\infty%
\end{array}%
\right. \text{.}  \label{eqn17}
\end{equation}%
The function $Tx$ has two fixed points $x_{1}=3$ and $x_{2}=6$. Since we
have $\underset{x\rightarrow 6}{\lim }M_{1}(x,6)=0$ $\left( \text{resp. }%
\underset{x\rightarrow 6}{\lim }M_{2}(x,6)=0\right) $, $T$ is continuous at
the fixed point $6$. But, there is not a limit of $M_{1}(x,3)$ $\left( \text{%
resp. }M_{2}(x,3)\right) $ as $x\rightarrow 3$ and so $T$ is discontinuous
at the fixed point $3$ $($see Figure \ref{fig:exm1}$)$. Consequently, using
the numbers $M_{1}(x,y)$ and $M_{2}(x,y)$ we can see that the activation
function defined in (\ref{eqn17}) is discontinuous at which fixed points.

More generally, in the case that the number of the fixed points of an
activation function is greater than two, our results will become important
to determine the discontinuity at fixed points. We note that fixed points
can be infinitely many. For example, there are some functions which fix a
circle with infinitely many points and these kind functions can be
considered as activation functions. For example, in \cite{Ozdemir-HNNV} it
was used new types of activation functions which fix a circle for a complex
valued neural network (CVNN). The existence of fixed points of the
complex-valued Hopfield neural network (CVHNN) was guaranteed by using these
types of activation functions. By these reasons, now we consider Theorem \ref%
{thm7} and the number $M_{1}(x,y)$ together. We obtain the following
proposition.

\begin{proposition}
Let $(X,d)$ be a metric space, $T$ be a self-mapping on $X$ and $C_{x_{0},r}$
be a fixed circle of $T$. Then $T$ is discontinuous at any $x\in C_{x_{0},r}$
if and only if $\underset{z\rightarrow x}{\lim }M_{1}(z,x)\neq 0$.
\end{proposition}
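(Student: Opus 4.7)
The plan is to exploit the fact that every $x\in C_{x_0,r}$ is fixed, so $Tx=x$ and $d(x,Tx)=0$. This collapses $M_1(z,x)$ to a very simple form, and from there both implications follow by routine triangle-inequality manipulations. Concretely, since $d(x,Tx)=0$, the two rational terms in the definition of $M_1$ both vanish, leaving
\begin{equation*}
M_1(z,x)=\max\left\{d(z,x),\,d(z,Tz)\right\}.
\end{equation*}
I would record this reduction as the first step, since it is what makes the whole statement transparent.

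Next I would prove the contrapositive form: $T$ is continuous at $x$ if and only if $\lim_{z\to x}M_1(z,x)=0$. For the ``only if'' direction, assume $T$ is continuous at $x$ and take an arbitrary sequence $z_n\to x$. Continuity gives $Tz_n\to Tx=x$, and the triangle inequality yields
\begin{equation*}
d(z_n,Tz_n)\le d(z_n,x)+d(x,Tz_n)\longrightarrow 0.
\end{equation*}
Combined with $d(z_n,x)\to 0$, the reduced formula forces $M_1(z_n,x)\to 0$. Since $\{z_n\}$ was arbitrary, $\lim_{z\to x}M_1(z,x)=0$.

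For the ``if'' direction, assume $\lim_{z\to x}M_1(z,x)=0$ and let $z_n\to x$. Then $M_1(z_n,x)\to 0$, and in particular $d(z_n,Tz_n)\to 0$. Applying the triangle inequality in the other direction,
\begin{equation*}
d(Tz_n,x)\le d(Tz_n,z_n)+d(z_n,x)\longrightarrow 0,
\end{equation*}
so $Tz_n\to x=Tx$, and $T$ is continuous at $x$. Negating both sides gives the stated equivalence.

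I do not anticipate any real obstacle: the only subtlety is noticing that, because $x$ is a fixed point, the complicated fractional terms in $M_1$ contribute nothing, after which the argument is essentially the same short triangle-inequality computation already used at the end of the proof of Theorem~\ref{thm1}. One should just be careful to phrase the limit $\lim_{z\to x}M_1(z,x)=0$ in the sequential sense (for every sequence $z_n\to x$ with $z_n\neq x$, $M_1(z_n,x)\to 0$) so that the equivalence with continuity at $x$ is clean.
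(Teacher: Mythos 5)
Your proof is correct and follows essentially the same route as the paper's: both pass to the contrapositive (continuity at $x$ iff $\lim_{z\to x}M_{1}(z,x)=0$) and use the sequential characterization together with the triangle inequality in each direction. The only difference is that you make explicit the collapse of $M_{1}(z,x)$ to $\max\{d(z,x),d(z,Tz)\}$ via $d(x,Tx)=0$, which the paper uses implicitly; this is a welcome clarification but not a different argument.
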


\begin{proof}
Let $T$ be a continuous self-mapping at $x\in C_{x_{0},r}$ and $%
x_{n}\rightarrow x$. Then $Tx_{n}\rightarrow Tx=x$ and $d(x_{n},Tx_{n})%
\rightarrow 0$. Hence we get $\underset{n}{\lim }M_{1}(x_{n},x)=0$.

On the other hand, if $\underset{x_{n}\rightarrow x}{\lim }M_{1}(x_{n},x)=0$
then $d(x_{n},Tx_{n})\rightarrow 0$ as $x_{n}\rightarrow x$. This implies $%
Tx_{n}\rightarrow x=Tx$, that is, $T$ is continuous at $x$.
\end{proof}

\begin{example}
If we consider the function $T$ defined in $($\ref{eqn17}$)$ then it is easy
to check that $T$ satisfies the conditions of Theorem \ref{thm7} for the
circle $C_{x_{0},r}=\left\{ 3,6\right\} $ with the center $x_{0}=\frac{9}{2}$
and the radius $r=\frac{3}{2}$. Therefore $T$ fixes the circle $%
C_{x_{0},r}=\left\{ 3,6\right\} $ as another point of view. By the above
proposition, it can be easily deduced that the function $T$ is continuous at
the point $x_{1}=6$ but is not continuous at $x_{2}=3$.
\end{example}

Finally, we note that it is possible to use the number $M_{2}(x,y)$ for the
investigation of discontinuity at any point on the fixed circle of the
activation function.

\section{\textbf{Conclusion}}

We mention that our main results are applicable to neural nets under
suitable conditions (see \cite{Cromme-1991}, \cite{Cromme-1997} and \cite%
{Todd-1976} for more details). For example, McCulloch-Pitts model is
frequently used in Biology and Artificial Intelligence according to the
discontinuity at fixed point. Also our main results can be applied on
complex-valued metric spaces since discontinuity of functions have been used
in various applicable areas such as complex-valued Hopfield neural networks
(see \cite{Wang-Hopfield} for more details).

\textbf{Acknowledgement.} The authors gratefully thank to the Referees for
the constructive comments and recommendations which definitely help to
improve the readability and quality of the paper.

\end{document}